\newtheorem{theorem}{Theorem}
\newtheorem{prop}[theorem]{Proposition}
\newtheorem{cor}[theorem]{Corollary}
\theoremstyle{definition}
\newtheorem{definition}[theorem]{Definition}
\newtheorem{example}[theorem]{Example}
\def\la{{\lambda}}
\def\al{{\alpha}}
\def\be{{\beta}}
\def\ga{{\gamma}}
\def\QQ{{\mathbb Q}}
\def\CC{{\mathbb C}}
\def\st{{\tilde s}}
\def\hht{{\tilde h}}
\def\mt{{\tilde m}}
\def\mvdash{{\,\vdash\!\!\vdash}}
\def\gb{{\overline g}}
\def\dcl{{\{\!\!\{}}
\def\dcr{{\}\!\!\}}}
\def\o{\overline}
\def\MCT{{\mathcal {MCT}}}
\def\SS{\mathbb{S}}
\newcommand{\pchoose}[2]{\begin{pmatrix}#1\\ #2\end{pmatrix}}
\newdimen\squaresize \squaresize=10pt
\newdimen\thickness \thickness=0.4pt
\def\square#1{\hbox{\vrule width \thickness
     \vbox to \squaresize{\hrule height \thickness\vss
        \hbox to \squaresize{\hss#1\hss}
     \vss\hrule height\thickness}
\unskip\vrule width \thickness}
\kern-\thickness}
\def\vsquare#1{\vbox{\square{$#1$}}\kern-\thickness}
\def\blk{\omit\hskip\squaresize}
\def\young#1{
\vbox{\smallskip\offinterlineskip
\halign{&\vsquare{##}\cr #1}}}
\def\thisbox#1{\kern-.09ex\fbox{#1}}
\def\downbox#1{\lower1.200em\hbox{#1}}
\newdimen\Squaresize \Squaresize=15pt
\newdimen\Thickness \Thickness=0.4pt
\def\Square#1{\hbox{\vrule width \Thickness
     \vbox to \Squaresize{\hrule height \Thickness\vss
        \hbox to \Squaresize{\hss#1\hss}
     \vss\hrule height\Thickness}
\unskip\vrule width \Thickness}
\kern-\Thickness}
\def\Vsquare#1{\vbox{\Square{$#1$}}\kern-\Thickness}
\title[Products of symmetric group characters]{Products of characters of the symmetric group}
\author[Rosa Orellana]{Rosa Orellana}%
\address{Dartmouth College, Mathematics Department, Hanover, NH 03755, USA} \email{rosa.c.orellana@dartmouth.edu}%
\author[Mike Zabrocki]{Mike Zabrocki}%
\address{Department of Mathematics and Statistics, York University, Toronto, Ontario M3J 1P3,
Canada} \email{zabrocki@mathstat.yorku.ca}%
\date{}  % Activate to display a given date or no date
\thanks{Work supported by NSF grants DMS-1700058 and DMS-1300512, and by NSERC}
\begin{document}
\maketitle

\begin{abstract}
In \cite{OZ16}, the authors introduced a new basis of the ring of symmetric functions which evaluate to the irreducible characters of the symmetric group at roots of unity.  The structure coefficients for this new basis are the stable Kronecker coefficients. In this paper we give combinatorial descriptions for several products that have as consequences several versions of the Pieri rule for this new basis of symmetric functions.  In addition, we give several applications of the products studied in this paper. 
\end{abstract}

%\section*{Introduction}

\begin{section}{Introduction}
Schur functions, $s_\la$, form a fundamental basis for
the ring of symmetric functions.
One reason for this is that they specialize to the characters of polynomial representations of the general linear group, $GL_n$. The structure coefficients for the Schur functions are the Littlewood-Richardson coefficients, $c_{\la\mu}^\nu$,
\[s_\la s_\mu = \sum_{\nu\vdash |\la|+|\mu|} c_{\la\mu}^\nu s_\nu.\]
In \cite{OZ16}, we introduced a new basis of symmetric functions, $\st_\la$, that we believe that in many ways is as fundamental as the basis of Schur functions since these functions specialize to the irreducible characters of the symmetric group, $S_n$.
The structure coefficients for the $\st$-basis,
\[\st_\la\st_\mu = \sum_{|\nu|\leq |\la|+|\mu|} \bar{g}_{\la\mu}^\nu \st_\nu~,\]
are the stable Kronecker coefficients, $\bar{g}_{\la\mu}^\nu$, \cite{BOR, Lit1,Murg3, Val} and they can be seen to be a generalization of the
Littlewood-Richardson coefficients in the sense that $c_{\la\mu}^\nu = \bar{g}_{\la\mu}^\nu$ when $|\nu|=|\la|+|\mu|$.
The $\st$-basis gives an elegant and natural symmetric function formulation for the stable Kronecker coefficients.

In addition, the irreducible character basis also reformulates the restriction problem \cite{ButlerKing, Lit2, King, Nis, ScharfThibon, STW} which asks for a combinatorial interpretation for the multiplicities when we restrict a $GL_n$ polynomial representation to $S_n$. Using the $\st$-basis, we are simply asking for the change of basis coefficients between the Schur basis and the $\st$-basis.

A notorious open problem is to find a positive combinatorial 
interpretation for the Kronecker coefficients, $g_{\la,\mu}^\nu$, 
these are the coefficients in the decomposition of the tensor product 
of two irreducible representations of $S_n$. The stable Kronecker 
coefficients are the Kronecker coefficients in the sense that for any 
three partitions $\alpha,\beta$ and $\gamma$, $\overline{g}_{\alpha,
\beta}^\gamma = g_{(n-|\alpha|,\alpha),(n-|\beta|,\beta)}^{(n-|
\gamma|,\gamma)}$ for $n\gg 0$. Many attempts have been made to find 
combinatorial interpretations and there are a large number of papers in the literature addressing several aspects of the Kronecker coefficients, for example \cite{BO, Blasiak, BOR, Man, PPV, Rem, RW, Rosas, ScharfThibonWybourne, Val}. Recent progress, however, has lead to combinatorial interpretations for very few special cases. In terms of the $\st$-basis, the only combinatorial interpretations known are for $\st_{1^k}\st_\lambda$ and $\st_k\st_\mu$ \cite{Blasiak, BO, Liu, BlasiakLiu}. These interpretations involve different objects for which there are no obvious generalizations.

%which in special cases, are equal to the $\st$-decomposition of the products
%$\st_{1^k}\st_\lambda$ and $\st_k\st_\mu$
%\cite{Blasiak, BO, Liu, BlasiakLiu}.

The $\st$-expansion of a symmetric function specializing to an $S_n$-character  corresponds to the decomposition of the corresponding representation into irreducible $S_n$ characters. In this setting, products of symmetric functions correspond to tensor products of representations. 
%when viewed as $S_n$-characters correspond to characters of tensor products of the corresponding representations.
In \cite{OZ16} we also introduced the $\hht_\mu$ symmetric functions which specialize to values of the trivial characters of a Young subgroup 
$S_{\mu_1} \times S_{\mu_2} \times \cdots \times S_{\mu_{\ell(\mu)}} \subseteq S_n$
induced to the symmetric group $S_n$.  The focus of this work are products involving the $S_n$-characters $h_\mu$, $\hht_\mu$ and $\st_\mu$, where $h_\mu$ are the complete homogeneous functions. 
%The symmetric function $h_k$ specializes to the character of the $k^{th}$ symmetric power of a ($Gl_n$ or $S_n$) $n$-dimensional module and $h_\mu = h_{\mu_1} h_{\mu_2} \cdots h_{\mu_\ell}$ specializes to the character of a tensor product of these modules.
%The symmetric function $\hht_\mu$ specializes to the character of the induced trivial representation from a Young subgroup to $S_n$ and $\st_\mu$ specializes to the irreducible character indexed by $(n - |\mu|, \mu)$.

The aim of this paper is to give combinatorial interpretations for the coefficients in the $\st$-expansion for the products  $\st_{\mu_1}\st_{\mu_2}\cdots \st_{\mu_\ell}\st_\lambda$, $\hht_{\mu_1}\hht_{\mu_2}\cdots \hht_{\mu_\ell}\st_\lambda$, $\hht_\mu \st_\la$, and $h_\mu \st_\la$.  Our combinatorial description for $h_\mu \st_\la$ is in terms of column strict tableaux filled with multisets containing at most one bar entry and unbarred entries
that satisfy a lattice condition by reading the barred entries
with respect to the unbarred entries.
When we look at the products  $\hht_{\mu_1}\hht_{\mu_2}\cdots \hht_{\mu_\ell}\st_\lambda$ we have the same objects but now we cannot repeat unbarred entries, that is cells are filled with sets. And for the products  $\st_{\mu_1}\st_{\mu_2}\cdots \st_{\mu_\ell}\st_\lambda$ we further require that only sets with more than one element can go on the first row of the tableau. Finally, for the products $\hht_\mu \st_\la$ we have the conditions for $ h_\mu\st_\la$ and the additional condition that at most one barred and at most one unbarred entry can be in each cell. This provides a unified description using the same combinatorial objects for all these products.

%The aim of this paper is to give the irreducible character basis expansion, the $\st$-expansion, for the products $\st_{\mu_1}\st_{\mu_2}\cdots \st_{\mu_\ell}\st_\lambda$, $\hht_{\mu_1}\hht_{\mu_2}\cdots \hht_{\mu_\ell}\st_\lambda$, $\hht_\mu \st_\la$, and $h_\mu \st_\la$ in terms of multiset filled tableaux. 

For symmetric functions $f$ and $g$, we say that $f\leq g$ if the coefficients of
$g-f$ when expanded in the $\st$-basis are non-negative. The combinatorial interpretations for the products described in this paper, clearly illustrate the following inequalities.

$$\st_{\mu_1} \st_{\mu_2} \cdots \st_{\mu_\ell} \st_\la \qquad \leq\qquad
\hht_{\mu_1} \hht_{\mu_2} \cdots \hht_{\mu_\ell} \st_\la \qquad\leq\qquad
h_\mu \st_\la$$

After some preliminaries we define a scalar product
on symmetric functions for which the set of
functions $\{\st_\la\}_{\la}$
forms an orthonormal basis (Section \ref{sec:scalarproduct}).
Our main results are the combinatorial rules for the products
$\st_{\mu_1} \st_{\mu_2} \cdots \st_{\mu_\ell} \st_\la$,
$\hht_{\mu_1} \hht_{\mu_2} \cdots \hht_{\mu_\ell} \st_\la$ and
$h_\mu \st_\la$ (Theorem \ref{thrm:hmustlam}, 
\ref{thrm:htkmultiplestla} and \ref{thrm:stkmultiplestla}) that appear
in Section \ref{sec:rules} and we prove these formulas in
Section \ref{sec:proofs} after also giving a combinatorial interpretation
for the product $\hht_\mu \st_\la$ in Section \ref{sec:htstprod}.
In Section \ref{sec:connections}, we end the paper with a discussion
of how our combinatorial interpretations
give a unified way to view several different mathematical questions
and constructions considered in the literature (e.g.
the restriction problem of an irreducible $GL_n$ module to $S_n$, 
quantum entanglements of $q$-bits,
Grothendeick symmetric functions and dimensions of irreducible representations of the partition and quasi-partition algebras).

\end{section}

%\todo{
%1. write an outline of introduction and then the actual introduction

%2. delete the examples (deleted one or two and commented out a number of others)

%3. make sure that the proofs are correct

%4. trim down the relevant references
%we won't need all of the references below, but we should try
%our hardest to connect these symmetric functions to the mathematical
%literature.  Some of these references were taken from the last paper

%5. need to clean up the LR rule section to know precisely
%which forms we are using.  Props \ref{prop:LR1}, \ref{prop:LR2}, \ref{prop:LR3}.

%6. Not such a long introduction and stay on message.

%7. Does the introduction tells a story? 
%}

\begin{section}{Preliminaries}

\begin{subsection}{Partition and diagram notation}
A partition of a non-negative integer $n$ is a list of
positive integers $\lambda = (\lambda_1, \lambda_2, \ldots, \lambda_{\ell})$
with $\lambda_i \geq \lambda_{i+1} > 0$ and $n = \sum\limits_{i=1}^{\ell}\lambda_i$.
We refer to $n$ as the size of the partition (denoted $|\lambda| := n$ or $\lambda\vdash n$).
The nonzero entries $\lambda_i$ are called the parts of the partition. The number of nonzero parts $\ell= \ell(\lambda)$ is called the length of $\lambda$.
We will often refer to the partition $\lambda$ with the first part removed as ${\overline \lambda} = (\lambda_2, \lambda_3, \ldots, \lambda_{\ell(\la)})$.
Similarly, if $a$ is an integer with $a \geq \la_1$, then $(a,\la)$ represents
the partition $(a, \la_1, \la_2, \ldots, \la_{\ell(\la)})$.
The set of all partitions is denoted by $\mbox{Par}$
and $\mbox{Par}_{\leq n}$ will denote partitions of
size less than or equal to $n$.

Partitions will be visually represented by their
Young diagrams with $n$ cells arranged in $\ell(\lambda)$ rows
with $\lambda_i$ cells in the $i^{th}$ row.  We will represent our partitions in French notation with the
largest part of the partition on the bottom and the coordinates of the cells are $(i,j)$ such that $1 \leq i \leq \ell(\la)$ and $1 \leq j \leq \la_i$.
The number of parts of the partition $\lambda$ which are of size $i$ will be denoted
$m_i(\lambda) := |\{ j : \lambda_j = i \}|$.

For any partition $\la$ we define 
%The size of the stabilizer of an element of $S_n$ with cycle structure $\lambda$ under the action of conjugation is
%equal to
$$z_\lambda = \prod_{i=1}^{\lambda_1} i^{m_i(\lambda)} m_i(\lambda)!~.$$

The conjugate of a partition $\la$ is denoted $\la'$ and it is the partition with the $i^{th}$ row equal to $\sum\limits_{j \leq i} m_j(\lambda)$.

For two partitions $\la$ and $\mu$ where $\ell(\lambda) \geq \ell(\mu)$ and $\lambda_i \geq \mu_i$ for all $1 \leq i \leq \ell(\mu)$. We define the skew partition $\la/\mu$ to be the cells of the partition $\la$ which are not in the cells of $\mu$.  We say that $\lambda/\mu$ is a horizontal strip
if $\lambda_{i} \leq \mu_{i+1}$ for $1 \leq i \leq \ell(\mu)-1$.
\end{subsection}

\begin{subsection}{Multisets and multiset partitions}
We will distinguish a multiset from a set using the notation
$\dcl 1^{a_1}, 2^{a_2}, \ldots, m^{a_m} \dcr$ to indicate that
the multiset contains $a_i$ copies of $i$ and the elements are considered without order.
%This notion of a combinatorial object comes from from monomials where the operation of union $\cup$ corresponds to product and subset $\subseteq$ corresponds to a monomial which divides another.
A multiset partition $\pi$ of a multiset $S$ is a
multiset of non-empty multisets whose union (with multiplicities) is $S$.  That is,
\begin{equation}
\pi = \dcl S_1, S_2, \ldots, S_{\ell(\pi)} \dcr
\end{equation}
with $S_1 \cup S_2 \cup \cdots \cup S_{\ell(\pi)} = S$ (here the union is taken with multiplicities).
We have denoted $\ell(\pi)$ as the number of parts of the
multiset partition.  If $S$ is a multiset then we will
indicate that $\pi$ is a multiset partition of $S$ with the
notation $\pi \mvdash S$.

\begin{subsection}{Multiset valued tableaux} \label{sec:multisettab}

We now describe properties of the tableaux  which are common to all the combinatorial interpretations presented in the following sections.

A \emph{tableau} $T$ is a map from a the set of cells of the Young diagram to a set of labels. We say that a tableau is \emph{column strict} (with respect to some order on the labels) if $T_{(i,j)} \leq T_{(i,j+1)}$ for all $1 \leq i \leq \ell(\la)$ and $1 \leq j < \la_i$ and if $T_{(i,j)} < T_{(i+1,j)}$ for all $1 \leq i < \ell(\la)$ and $1 \leq j \leq \la_{i+1}$.   We are interested in column strict tableaux with labels being multisets, thus we need to define the order on multisets that we will use throughout the paper.   We assume that $\o1< \o2<\cdots < 1<2< \cdots$. Let $M=\dcl m_1\leq m_2\leq \dots \leq m_r\dcr$ and $N=\dcl n_1\leq n_2 \leq \dots \leq n_s\dcr$ be two multisets with elements from the set $\{\o1,\o2, \ldots, 1, 2, \ldots\}$ and at most one barred entry. We define $M<N$ if $m_{r-i}=n_{s-i}$ for $0\leq i\leq k$ and $m_{r-k-1}<n_{s-k-1}$ or $r-i=0$ and $s-i>0$.  This is the \emph{reverse lexicographic} order (\emph{reverse lex}) on the entries of the multisets.  For example, $\dcl \o{5}, 1,1,1,2,2,3,4\dcr < \dcl\o{2}, 1,1,2,3,3,4 \dcr$.

%In our tableaux we fill the cells with non-empty multisets chosen from a completely ordered alphabet $A=\{a_1, a_2,\dots\}$. 

The \emph{content} of a tableau $T$ is defined as the multiset which contains $a_i^{m_i}$ where $a_i\in \{\o1,\o2, \ldots, 1, 2, \ldots\}$ occurs $m_i$ times in $T$.     The cells of the tableaux will be filled with non-empty multisets with elements chosen from the set $\{\bar{1}, \bar{2},\bar{3} \ldots, 1, 2, 3,\ldots \}$ such that each multiset contains at most one barred entry. Thus, the content of the tableaux is $\dcl \o1^{\al_1}, \o2^{\al_2}, \ldots, \o{\ell}^{\al_\ell}, 1^{\be_1}, 2^{\be_2}, \ldots, k^{\be_k} \dcr$ for some weak compositions $\alpha$ and $\beta$.

The \emph{shape} of a tableau $T$ is the sequence obtained by reading the lengths of each row in $T$. We denote by $sh(T)$ to be the shape of $T$.
All of our tableaux will be of shape $(r,\gamma)/(\gamma_1)$ for a partition $\gamma$ and some integer $r\geq \ga_1$.

\begin{definition} Let $\gamma$ be a partition, $\alpha$ and $\beta$ be compositions. Then the set $\MCT_\gamma{(\alpha,\beta)}$ contains tableaux $T$ that are column strict with respect to the reverse lex order, have shape $(r,\gamma)/(\gamma_1)$, and content $\dcl \o1^{\al_1}, \o2^{\al_2}, \ldots, \o{\ell}^{\al_\ell}, 1^{\be_1}, 2^{\be_2}, \ldots, k^{\be_k} \dcr$ with at most one barred entry in each cell. Further, we require that cells in the first row cannot be filled with multisets containing only barred entries.

\end{definition}

In our examples of tableaux, to save space, the
$\dcl~\dcr$ are dropped from the labels of the entries.

\begin{example}\label{sampletab} The following tableau is in $\MCT_{(3,3,2,1)}((3,2,1),(5,2,1,1))$

\squaresize=16pt
$$T =\raisebox{-30pt}{\young{3\cr\o111&\o24\cr\o2&2 &\o12\cr\o1&1&1\cr&&&\o31\cr}}$$
Then $sh(T) =(4,3,3,2,1)/(3)$ and $\o{sh(T)} = (3,3,2,1)$. 
\end{example}

For any tableau $T$, let $read(T)$ be the word of entries in the tableau from bottom row to top row and from right to left in the rows.  If $S$ is a multiset of non-barred entries then let $read(T|_{S})$ represent the reading word
of the barred entries in the cells with $\dcl \o{j}, S \dcr$
as a label and let $read(T|_{-})$ represent the reading word of the cells which have only a barred entry, i.e., when $S=\emptyset$.  The multisets with no barred entries do not contribute to the reading word.

In the tableau of Example \ref{sampletab}, we have $read(T|_{-})=\o1\o2$, $read(T|_{1})=\o3$, $read(T|_{1^2}) = \o1$  $read(T|_{2})=\o1$, $read(T|_{3})=\emptyset$ and $read(T|_{4})=\o2$.  We remark that we have omitted the brackets in the indexing sets to unclutter the notation.

We indicate the concatenation of several words of the form 
$read(T|_S)$ by placing dots at the end of each word $read(T|_S)$. Consecutive dots indicates that some words are empty.   For example, $read(T|_{-})read(T|_{1})read(T|_{1^2})read(T|_{2})read(T|_{3})read(T|_{4})=\o1\o2.\o3.\o1.\o1..\o2$ in the tableau of Example \ref{sampletab}.

Let $w$ be a word of content $\dcl 1^{\lambda_1}, 2^{\lambda_2}, \ldots, \ell^{\lambda_\ell}\dcr$.  For any subword of
$u$ of $w$, let $n_i(u)$ be the number of occurrences of $i$ in $u$.
A word $w$ is called {\it lattice} if for all subwords 
$u$ and $v$ such that $w = uv$,
$n_i(u) \geq n_{i+1}(u)$ for all $1 \leq i \leq \ell$.
We use this definition to define a lattice multiset tableau.

\begin{definition} If $T \in \MCT_\gamma{(\lambda,\mu)}$
for some partitions $\lambda, \mu$ and $\gamma$, then
let $S_1 < S_2 < \ldots < S_d$ be the multisets
of the unbarred entries that appear in $T$
(ignoring the barred entries), then we say that $T$ is
a \emph{lattice tableau} if the word
$$read(T|_{-}) read(T|_{S_1}) read(T|_{S_2})
\cdots read(T|_{S_d})$$
is lattice.
\end{definition}

\begin{example}
Consider the two tableaux
\squaresize=14pt
$$\raisebox{-10pt}{\young{\o1&\o11&\o21&\o22\cr&&&\cr}}
\qquad \hbox{ and }\qquad
\raisebox{-10pt}{\young{\o1&\o21&\o21&\o12\cr&&&\cr}}$$
which are both elements of $\MCT_{(4)}((2,2),(2,1))$. 
The tableau on the left is lattice because 
$read(T|_-)read(T|_1)read(T|_2) = \o1.\o1\o2.\o2$; however, 
the tableau on the right is not lattice since
$read(T|_-)read(T|_1)read(T|_2) = \o1.\o2\o2.\o1$.
\end{example}

\end{subsection}

\end{subsection}

\begin{subsection}{Symmetric function notation} \label{subsec:sf}
Define the Hopf algebra of symmetric functions to be the polynomial ring 
$$\Lambda = \QQ[h_1, h_2, h_3, \ldots]
%= \QQ[e_1, e_2, e_3, \ldots] 
= \QQ[p_1, p_2, p_3, \ldots]$$
where the generators are related by the equation
$$h_m = \sum_{\lambda \vdash m} \frac{p_\lambda}{z_\lambda}. $$

%$$h_m = \sum_{i=1}^m (-1)^{i-1} h_{m-i} e_i \hskip.2in  
%\hskip.2in
%e_m = \sum_{\lambda \in \bP_m} \frac{(-1)^{m+\ell(\lambda)} p_\lambda}{z_\lambda}~.$$
%$$e_m = \sum_{i=1}^m (-1)^{i-1} e_{m-i} h_i \hskip.2in p_m = m h_m - \sum_{r=1}^{m-1} p_r h_{m-r} \hskip.2in
%p_m = (-1)^{m-1} m e_m - \sum_{r=1}^{m-1} (-1)^{r-1} p_r e_{m-r}~.$$
Then, for a partition $\lambda$, the complete and power sum symmetric functions are defined by
$$h_\lambda:= h_{\lambda_1} h_{\lambda_2} \cdots h_{\lambda_{\ell(\lambda)}}\hskip.2in
%e_\lambda:= e_{\lambda_1} e_{\lambda_2} \cdots e_{\lambda_{\ell(\lambda)}}\hskip.1in\hbox{ and }\hskip.1in
p_\lambda:= p_{\lambda_1} p_{\lambda_2} \cdots p_{\lambda_{\ell(\lambda)}}.$$
Since $\Lambda$ is a polynomial ring, it has as linear bases
$\{ h_\lambda \}_{\lambda \in \mbox{Par}}$ and $\{ p_\lambda \}_{\lambda \in \mbox{Par}}$, where $\mbox{Par}$ denotes the set of all partitions.

A definition of the Schur basis, $\{s_\la\}_{\la \in \mbox{Par}}$,
is by the formula
$$
h_\mu = \sum_{\la\vdash |\mu|} K_{\la\mu}s_\la ,
$$
where $K_{\la\mu}$ is the Kostka coefficient and it is equal to the
number of column strict tableau of shape $\la$ and content $\mu$.

The ring of symmetric function is endowed with a scalar product for which the power sum symmetric functions are orthogonal, 
$\langle p_\lambda, p_\mu \rangle = z_{\lambda} \delta_{\lambda\mu}$
where $\delta_{\lambda\lambda} =1$ and $\delta_{\lambda\mu} = 0$ if $\lambda \neq \mu$.
%With respect to this scalar product, the 
%monomial basis $\{ m_\lambda\}_{\lambda \in \bP}$ is the graded
%dual of the complete symmetric functions
%$\langle h_\lambda, m_\mu \rangle = \delta_{\lambda\mu}$
and the Schur basis is self dual $\langle s_\lambda, s_\mu \rangle = \delta_{\la\mu}$.
A useful application of the scalar product
and the dual basis is to 
give an expansion of one symmetric function in one of these bases since
for any symmetric function $f$,
\begin{equation}\label{eq:symfun}
f = %\sum_{\lambda \in \bP} \left< f, m_\lambda \right> h_\lambda
\sum_{\lambda \in \mbox{Par}} \left< f, s_\lambda \right> s_\lambda
= \sum_{\lambda \in \mbox{Par}} \left< f, p_\lambda \right> \frac{p_\lambda}{z_\lambda}~.
\end{equation}

The structure coefficients of Schur functions are known as the Littlewood-Richardson coefficients, denoted by $c_{\la,\mu}^\nu $,
\begin{equation}
s_\lambda s_\mu = \sum_{\nu \vdash {|\la|+|\mu|}} c_{\la\mu}^\nu s_\nu ~,
\end{equation}
and in general, the coefficient of $s_\lambda$ in the
product $s_{\nu^{(1)}} s_{\nu^{(2)}}
\cdots s_{\nu^{(\ell)}}$ is denoted $c_{\nu^{(1)}\nu^{(2)}\cdots\nu^{(\ell)}}^\lambda$.
Combinatorial interpretations for the coefficients $c_{\lambda\mu}^\nu$
are given in Section \ref{sec:LRrules}.

The Kronecker product is a second product on symmetric functions defined such that
$\frac{p_\la}{z_\la} \ast \frac{p_\mu}{z_\mu} = \delta_{\la\mu} \frac{p_\la}{z_\la}$.
The Kronecker coefficients, $g_{\la\mu}^\nu$, are the structure
coefficients with respect to the Schur basis 
as in the coefficients of the equation
\begin{equation}
s_\lambda \ast s_\mu = \sum_\nu g_{\la\mu}^\nu s_\nu~.
\end{equation}
We assume that the reader is familiar with basic facts about the Kronecker coefficients
including the basic stability properties \cite{BOR, Murg2, Murg3, Val} and that $\left<f \ast g, h \right> =
\left< f, g \ast h \right>$.  We will use the notation
$\gb_{\la\mu}^\nu := g_{(n-|\la|,\la)(n-|\mu|,\mu)}^{(n-|\nu|,\nu)}$ for the stable limit of these coefficients if $n$ is sufficiently large.

%We also need to reference the
%change of basis coefficients that occur in the expressions
%$$h_\mu = \sum_{\lambda \in \bP_{|\mu|}} K_{\la\mu} s_\lambda 
%\hskip.1in\hbox{ or }\hskip.1in s_\lambda = \sum_{\mu \in \bP_{|\la|}} K_{\la\mu} m_\mu~.$$
%They are known as the Kostka coefficients and
%$K_{\lambda\mu}$ is equal to the number of column
%strict tableaux of shape $\lambda$ and content $\mu$.

\end{subsection}

\begin{subsection}{Littlewood-Richardson rules} \label{sec:LRrules}
Our combinatorial interpretations are built upon
tableaux constructions of the Littlewood-Richardson
rule.  We will need the following two forms of the
Littlewood-Richardson rule that we will use in the
proofs below.

For a tableau $T$ of content
$\dcl 1^{\lambda_1}, 2^{\lambda_2}, \ldots, \ell^{\lambda_\ell}\dcr$ (potentially skew shape),
we say that $T$ is lattice if $read(T)$ is lattice
(see Section \ref{sec:multisettab} for the definition of
$read(T)$ and lattice word).
The other form of the Littlewood-Richardson rule uses
jeu de Taquin and we will
assume that the reader is familiar with the relevant
definitions to use these forms of the statement of
the Littlewood-Richardson rule.

\begin{prop}\label{prop:LR1}
(\cite{Ful}, Exe. 1, Sect. 5.2, pp. 66)
The Littlewood-Richardson coefficient
$c^\nu_{\la\mu}$
is equal to the number of pairs of tableaux $(S,T)$
where $S$ is of shape $\la$ and $T$ is of shape $\mu$
such that
$read(S)read(T)$ is a lattice word of content
$\dcl 1^{\nu_1}, 2^{\nu_2}, \ldots, \ell^{\nu_{\ell(\nu)}}\dcr$.
\end{prop}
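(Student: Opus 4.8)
The plan is to deduce this from the standard skew form of the Littlewood--Richardson rule, namely that $c^\nu_{\la\mu}$ equals the number of column strict tableaux of shape $\nu/\la$ and content $\dcl 1^{\mu_1},2^{\mu_2},\ldots\dcr$ whose reading word is lattice. The argument splits into two parts: first, observe that in any admissible pair the tableau $S$ is completely rigid; then transport the remaining tableau onto a skew shape by jeu de taquin.

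For the first part I would establish the auxiliary fact that a column strict tableau $U$ of a straight shape $\gamma$ (in the order $1<2<3<\cdots$) has lattice reading word if and only if $U$ is the tableau whose $i$-th row consists entirely of $i$'s, in which case its content is $\gamma$. This is an induction on the rows of $U$ read from the bottom up. The first letter of $read(U)$ is the rightmost entry of the bottom row, so the lattice condition already on the length-one prefix forces it to equal $1$, and weak increase along that row then forces the whole bottom row to be filled with $1$'s. If rows $1,\dots,k-1$ have been shown to have the stated form, the portion of $read(U)$ read so far is $1^{\gamma_1}2^{\gamma_2}\cdots(k-1)^{\gamma_{k-1}}$; column strictness gives that the rightmost entry $c$ of row $k$ satisfies $c\ge k$, and appending $c$ to this prefix and imposing the lattice inequality at index $c-1$ forces $c\le k$, hence $c=k$, whereupon weak increase makes all of row $k$ equal to $k$. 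Since $read(S)$ is a prefix of the lattice word $read(S)read(T)$ it is itself lattice, so the auxiliary fact forces $S$ to be the superstandard tableau of shape $\la$, of content $\la$. It follows that $\la\subseteq\nu$ (the content of $read(T)$ is the componentwise difference $(\nu_1-\la_1,\nu_2-\la_2,\ldots)$, which must be nonnegative), and what is left to count is the column strict tableaux $T$ of shape $\mu$ and content $(\nu_1-\la_1,\nu_2-\la_2,\ldots)$ for which $1^{\la_1}2^{\la_2}\cdots\,read(T)$ is a lattice word.

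The remaining, and main, task is to match these tableaux $T$ bijectively with the lattice skew tableaux of shape $\nu/\la$ and content $\mu$, which by the skew Littlewood--Richardson rule are counted by $c^\nu_{\la\mu}$. The natural tool is jeu de taquin: a column strict skew tableau of shape $\nu/\la$ rectifies by forward slides into the inner shape $\la$, and conversely a column strict tableau $T$ of shape $\mu$ slides outward into the skew shape $\nu/\la$; since slides preserve the multiset of entries and, measured against the fixed prefix $1^{\la_1}2^{\la_2}\cdots$ contributed by $\la$, preserve the lattice property, these two operations are mutually inverse and yield the desired bijection. Alternatively, the reduction above already identifies the admissible pairs with the highest weight elements of weight $\nu$ in the crystal $B(\la)\otimes B(\mu)$, after which the statement follows from $B(\la)\otimes B(\mu)\cong\bigoplus_\nu B(\nu)^{\oplus c^\nu_{\la\mu}}$ and the tensor product rule for highest weight elements. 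The hard part is precisely this last bijection: one must check that the slides, equivalently the Knuth moves on reading words, interact correctly with the lattice condition in the present reading convention; once that is in hand the two parts combine to give the statement, which is exactly the cited exercise of Fulton.
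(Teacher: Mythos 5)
The paper itself offers no proof of this proposition --- it is quoted from Fulton as an exercise --- so the only question is whether your argument stands on its own. Your first step does: the induction showing that a straight-shape column strict tableau whose reading word is lattice must be the superstandard tableau is correct, and it correctly reduces the problem to counting column strict tableaux $T$ of shape $\mu$ and content $\nu-\la$ (componentwise) such that $1^{\la_1}2^{\la_2}\cdots read(T)$ is lattice.

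The second step, as written, has a genuine gap. You propose to match these $T$ with the lattice skew tableaux of shape $\nu/\la$ and content $\mu$ by jeu de taquin, but jeu de taquin preserves content, and the two sets have different contents ($\nu-\la$ versus $\mu$), so no sliding map can possibly carry one onto the other. Worse, rectifying a lattice skew tableau of content $\mu$ always yields the unique superstandard (Yamanouchi) tableau of shape $\mu$, so rectification collapses the entire skew-tableau set to a single object and is far from injective; and ``sliding $T$ outward into shape $\nu/\la$'' is not even a well-defined map, since outward slides depend on a choice of outer corners. The classical bridge between the two formulations is not a slide but the companion (recording) correspondence: from a lattice skew tableau of shape $\nu/\la$ and content $\mu$ one records, for each letter $i$, the rows of its occurrences, producing a column strict tableau of shape $\mu$ and content $\nu-\la$ satisfying your prefixed lattice condition, and checking that this is a bijection compatible with the lattice conditions is precisely the content of the exercise --- it is the piece your proposal leaves unverified. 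Your alternative crystal argument is sound in outline (lattice concatenated reading word is exactly the highest-weight condition on $S\otimes T$ under the paper's reading convention, and the decomposition of $B(\la)\otimes B(\mu)$ then gives the count), but it must replace, not supplement, the slide argument, and it still requires verifying the convention-dependent equivalence between latticeness and highest weight.
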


We will represent these pairs $(S,T)$ graphically by placing $S$ below and to the right of $T$
so that $read(S)read(T)$ is the extension of the reading of the skew tableau.

\begin{example}
The coefficient $c_{(2,1),(3,1,1)}^{(4,2,1,1)} = 2$
is equal to the number of the following tableaux
$$\young{4\cr
1&2\cr
\blk&\blk&3\cr
\blk&\blk&2\cr
\blk&\blk&1&1&1\cr}\hskip 1in
\young{2\cr
1&4\cr
\blk&\blk&3\cr
\blk&\blk&2\cr
\blk&\blk&1&1&1\cr}
$$
where the reading words of these tableaux are $11123214$ and $11123412$ respectively.
If we change the roles of $(2,1)$ and $(3,1,1)$,
$$\young{4\cr
3\cr
1&1&2\cr
\blk&\blk&\blk&2\cr
\blk&\blk&\blk&1&1\cr}\hskip 1in
\young{4\cr
2\cr
1&1&3\cr
\blk&\blk&\blk&2\cr
\blk&\blk&\blk&1&1\cr}
$$
where the reading words of these tableaux are $11221134$ and $11231124$ respectively.
\end{example}

\begin{prop}\label{prop:LR2} (\cite{Ful}, Corollary 1-4,  \cite{Stanley}, Fomin's Appendix, Theorem A1.3.1)
The Littlewood-Richard coefficient $c_{\lambda\mu}^\nu$ is equal to the number of
column strict tableaux of shape $\nu/\mu$ which are jeu de Taquin equivalent to a particular
(arbitrary) column strict tableau of shape $\lambda$.
\end{prop}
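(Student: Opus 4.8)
The plan is to derive the statement from the standard jeu de taquin machinery, following the cited sources. The first ingredient I would recall is the \emph{confluence theorem}: for a column strict skew tableau $R$, the straight-shape tableau obtained by repeatedly sliding entries into the empty inner cells is independent of the order in which the slides are carried out; call it $\mathrm{rect}(R)$. This makes ``jeu de taquin equivalent'' an honest equivalence relation, and among skew tableaux of a fixed shape $\nu/\mu$ its classes are indexed by the straight-shape tableaux occurring as rectifications. A second elementary point is that a single slide only relocates entries, so $R$ and $\mathrm{rect}(R)$ have the same content, and hence $x^R = x^{\mathrm{rect}(R)}$.

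Next I would use the combinatorial expansion $s_{\nu/\mu} = \sum_R x^R$, the sum over column strict tableaux $R$ of shape $\nu/\mu$, and group the terms according to $V := \mathrm{rect}(R)$; by the content-preservation just noted this gives
$$s_{\nu/\mu} = \sum_V \big| \{ R : sh(R) = \nu/\mu,\ \mathrm{rect}(R) = V \} \big|\, x^V,$$
the sum running over all straight-shape column strict tableaux $V$. The crucial claim is that $N(V) := \big| \{ R : sh(R) = \nu/\mu,\ \mathrm{rect}(R) = V \} \big|$ depends on $V$ only through its shape $\lambda := sh(V)$. Granting this, collecting the $V$ of a common shape $\lambda$ and using the combinatorial identity $\sum_{sh(V) = \lambda} x^V = s_\lambda$ turns the display into $s_{\nu/\mu} = \sum_\lambda N(\lambda)\, s_\lambda$; comparing with the Littlewood--Richardson expansion $s_{\nu/\mu} = \sum_\lambda c^\nu_{\lambda\mu}\, s_\lambda$ (equivalently, with the standard definition of the skew Schur function via $\langle s_{\nu/\mu}, s_\lambda\rangle = \langle s_\nu, s_\lambda s_\mu\rangle$ and the definition of $c^\nu_{\lambda\mu}$ in Section~\ref{sec:LRrules}) yields $N(\lambda) = c^\nu_{\lambda\mu}$, which is exactly the assertion.

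The main obstacle is the independence of $N(V)$ from the representative $V$. I would establish it from the \emph{reversibility} of jeu de taquin: every inward slide is undone by an outward (``reverse'') slide, so starting from a straight-shape $V$ and performing all sequences of $|\mu|$ reverse slides that carve out the inner shape $\mu$ produces---again by the confluence theorem, which guarantees that reverse-sliding and then rectifying returns $V$---precisely the skew tableaux $R$ of shape $\nu/\mu$ with $\mathrm{rect}(R) = V$, each exactly once; the number of such slide sequences manifestly sees nothing about $V$ beyond its outline $\lambda$. The alternative route, closest to Fomin's appendix to \cite{Stanley}, is through Knuth (plactic) equivalence: $\mathrm{rect}(R) = V$ if and only if $read(R)$ is Knuth equivalent to $read(V)$, after which a standardization argument reduces the claim to standard tableaux, where it reflects the fact that Knuth classes of permutations are indexed by their insertion tableaux. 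Once this step is in hand---and it is precisely what is proved in \cite{Ful} and in Fomin's appendix---the remaining steps above are routine; one may also cross-check the result against Proposition~\ref{prop:LR1} by taking $V$ to be the superstandard tableau with all entries of row $i$ equal to $i$, for which the tableaux $R$ rectifying to $V$ are exactly the lattice skew tableaux of content $\lambda$.
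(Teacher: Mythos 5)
The paper does not prove this proposition; it is stated as a known result with citations to Fulton and to Fomin's appendix in Stanley, so there is no in-paper proof to compare against. Your surrounding framework is correct: confluence makes $\mathrm{rect}$ well defined and content-preserving, grouping the monomial expansion of $s_{\nu/\mu}$ by rectification target $V$ gives $s_{\nu/\mu} = \sum_V N(V)\,x^V$, and once one knows $N(V)$ depends only on $sh(V)$ this collapses to $s_{\nu/\mu} = \sum_\lambda N(\lambda)\,s_\lambda$, which pinned against $\langle s_{\nu/\mu}, s_\lambda\rangle = c^\nu_{\lambda\mu}$ gives the claim. You are also right that the whole weight of the proposition rests on the independence of $N(V)$ from the filling of $V$, and right to defer that to the cited sources via the plactic/Knuth route.

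The reverse-slide heuristic you offer for that independence, however, has a real gap. A reverse slide sequence from $V$ is a sequence of outer-corner choices, but which inner corner each slide vacates is governed by the \emph{entries} of $V$, not its shape; consequently the set of outer-corner sequences that terminate at the prescribed skew shape $\nu/\mu$ is not ``manifestly'' the same for two fillings $V, V'$ of shape $\lambda$. Moreover each $R$ is reached by several slide sequences (one for each rectification order, i.e.\ each standard tableau of shape $\mu$), so ``each exactly once'' is also not right as stated, though that overcount is at least uniform in $R$. In short, the assertion that the count of valid sequences depends only on $\lambda$ is not an observation but is essentially equivalent to the key lemma itself; it is precisely what the ``fundamental theorem of jeu de taquin'' (Fulton, Sect.~5.1, or Haiman's dual equivalence, or the plactic-monoid argument you mention) establishes. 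Your sketch is therefore a fair outline as long as it leans on the Knuth-equivalence route and the cited sources for that step, but the reverse-slide paragraph should not be presented as an independent proof of it.
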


\begin{example}\label{ex:LRFomin}
The coefficient $c_{(2,1),(3,1,1)}^{(4,2,1,1)} = 2$
is equal to the number of the following tableaux.
$$\young{2\cr
\cr
&1\cr
&&&1\cr}\hskip 1in \young{1\cr
\cr
&2\cr
&&&1\cr}$$
or by interchanging the roles of $(2,1)$ and $(3,1,1)$,
$$\young{3\cr
2\cr
&1\cr
&&1&1\cr}
\hskip 1in \young{3\cr
1\cr
&2\cr
&&1&1\cr}$$
\end{example}
\end{subsection}

%Another version of the Littlewood-Richardson in terms of lattice words is the following:

%\begin{prop}\label{prop:LR3} (Corollary 1-4 of \cite{Ful} and Theorem A1.3.3, \cite{Stanley})
%The Littlewood-Richard coefficient $c_{\lambda\mu}^\nu$ is equal to the number of column strict tableaux of shape $\nu/\mu$ and content $\lambda$ whose reading word is a lattice permutation. 
%\end{prop}
%
%\begin{example}
%The reading words of the
%tableaux in Example \ref{ex:LRFomin} are
%$112$, $121$, $11123$, $11213$ and these words are all lattice,
%while $12122132$ is not a lattice word since the head of the word $12122$
%has more $2$s than $1$s.
%\end{example}

%\end{section}
%\begin{section}{A characterization for the $\st$-basis}

%In this section we introduce a new inner product on symmetric functions and show that the $\st$-basis is orthonormal with respect to this inner product. 

\begin{subsection}{Characters as symmetric functions}
Every partition $\la$ with $\ell(\la)\leq n$ indexes an irreducible polynomial module of $GL_n(\CC)$ with character at an element with eigenvalues $x_1, x_2,\ldots, x_n$ equal to $s_\lambda[X_n]$ where $X_n = x_1 + x_2 + \cdots + x_n$.

Now the symmetric group, $S_n$, as permutation matrices, is contained in $GL_n(\CC)$. 
%In \cite{OZ16}, we showed that the characters of the symmetric group $S_n$ are symmetric polynomials in the eigenvalues of the permutation matrices and we can consider them as specializations of symmetric functions. 
In \cite{OZ16}, we introduced two new bases  of symmetric functions $\{\st_\la\}_{\la\in\mbox{Par}}$ and $\{\hht_\la\}_{\la\in\mbox{Par}}$ that have the property that they specialize to characters of the symmetric group when evaluated at the eigenvalues of permutation matrices. In particular, the functions $\st_\la$ specialize to irreducible $S_n$ characters. In what follows, we cite some results from \cite{OZ16} that we will need in this paper. 

The $\st_\la$ and $\hht_\la$ are defined via change of basis formulae. For a set partition $\pi$ of length $\ell(\pi)$, we define $\tilde{m}(\pi)$ to be the partition of $\ell(\pi)$ consisting of the multiplicities of the multisets occurring in $\pi$, for example,  $\tilde{m}(\dcl \dcl 1,1,2 \dcr, \dcl 1,1,2 \dcr, \dcl 1,4 \dcr \dcr)=(2,1)~.$

%We wish to define and work with the characters of the symmetric group.
%Define three symmetric functions by the following three change of basis formulae.
\begin{definition} Fix a partition $\mu$
and a positive integer $n \geq 2|\mu|$,
\begin{align}
h_\mu &= \sum_{\pi \mvdash \dcl 1^{\mu_1}, 2^{\mu_2}, \ldots, \ell(\mu)^{\ell(\mu)}\dcr} \hht_{\mt(\pi)}\label{eq:htdef}\\
\hht_\mu &= \sum_{\la \in \mbox{Par}_{\leq |\mu|}} K_{(n-|\la|,\la), (n-|\mu|,\mu)} \st_\la\label{eq:stdef}
= \sum_{\nu \vdash |\mu|} \sum_{\lambda} K_{\nu\mu} \st_\lambda
\end{align}
where the $K_{\lambda\mu}$ are the Koskta coefficients and the inner sum on the right
is over all partitions $\lambda$ such that $\nu/\lambda$ is a horizontal strip.
\end{definition}

\begin{theorem}\label{thrm:charsummary} (Equations (6)--(10) of \cite{OZ16})
For a partition $\lambda$, 
the symmetric functions $\st_\lambda$ and $\hht_\lambda$ have the property that
for a positive integer $n \geq |\lambda|+ \lambda_1$ and $\mu \vdash n$,
\begin{equation}
\st_\la[\Xi_\mu] = \left< s_{(n-|\lambda|, \lambda)}, p_\mu \right> = \chi^{(n-|\lambda|, \lambda)}(\mu)
\hskip .1in\hbox{ and }\hskip .1in
\hht_\la[\Xi_\mu] = \left< h_{(n-|\lambda|, \lambda)}, p_\mu \right>~.
\end{equation}
\end{theorem}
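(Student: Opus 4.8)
The statement to prove is Theorem~\ref{thrm:charsummary}, which asserts the character specializations $\st_\la[\Xi_\mu] = \chi^{(n-|\la|,\la)}(\mu)$ and $\hht_\la[\Xi_\mu] = \langle h_{(n-|\la|,\la)}, p_\mu \rangle$ for $n \geq |\la| + \la_1$. Since this is cited directly as ``Equations (6)--(10) of \cite{OZ16}'', the cleanest route is to derive it from the defining change-of-basis formulae \eqref{eq:htdef} and \eqref{eq:stdef} together with the classical plethystic identity describing the restriction of $GL_n$-characters to the symmetric group. The plan is as follows.

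First I would recall the key input: if $\Xi_\mu$ denotes the (multiset of) eigenvalues of a permutation matrix of cycle type $\mu$, then for any symmetric function $f$ one has $f[\Xi_\mu]$ equal to the virtual character obtained by evaluating $f$ at those eigenvalues, and in particular the fundamental fact that for a partition $\nu \vdash n$, the Schur function $s_\nu[\Xi_\mu] = \chi^\nu(\mu)$ is the irreducible $S_n$-character value. Equivalently, $s_\nu[\Xi_\mu] = \langle s_\nu, p_\mu \rangle$ — this is just the standard expansion $p_\mu = \sum_\nu \chi^\nu(\mu) s_\nu$ read off via self-duality of the Schur basis. This is the ``$\mu \vdash n$'' normalization; the content of the theorem is that the stabilized objects $\st_\la$, $\hht_\la$ reproduce the $(n-|\la|,\la)$-row-indexed versions of these for all sufficiently large $n$, independently of $n$.

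Next, for the $\hht$ case I would apply the change of basis \eqref{eq:stdef}: $\hht_\mu = \sum_{\la} K_{(n-|\la|,\la),(n-|\mu|,\mu)} \st_\la$. Taking $\Xi_\mu$-specialization and using that $\st_\la[\Xi_\mu]$ is (by the part of the theorem proved for $\st$) $\chi^{(n-|\la|,\la)}(\mu)$, the right side becomes $\sum_\la K_{(n-|\la|,\la),(n-|\mu|,\mu)} \chi^{(n-|\la|,\la)}(\mu) = \langle \sum_\la K_{(n-|\la|,\la),(n-|\mu|,\mu)} s_{(n-|\la|,\la)}, p_\mu \rangle = \langle h_{(n-|\mu|,\mu)}, p_\mu \rangle$, where the last equality is the Kostka expansion $h_\nu = \sum_\la K_{\la\nu} s_\la$ specialized to $\nu = (n-|\mu|,\mu)$. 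So the $\hht$ statement follows from the $\st$ statement. The $\st$ statement itself I would prove by downward induction on $|\la|$ (or equivalently by triangularity of the Kostka matrix): the base case and the triangular structure of \eqref{eq:stdef} let one invert it, expressing $\st_\la$ as an explicit integer combination of the $\hht_\nu$; then one checks $\hht_\nu[\Xi_\mu] = \langle h_{(n-|\nu|,\nu)}, p_\mu \rangle$ directly from \eqref{eq:htdef} by interpreting the multiset-partition sum as the Cauchy-type identity governing induction from a Young subgroup, and verifies the resulting alternating sum collapses to $\langle s_{(n-|\la|,\la)}, p_\mu \rangle$. The stability in $n$ is exactly the statement that $\langle s_{(n-|\la|,\la)}, p_\mu\rangle$ and the Kostka numbers $K_{(n-|\la|,\la),(n-|\mu|,\mu)}$ are independent of $n$ once the first row $n - |\la|$ is weakly larger than the second part, i.e.\ $n - |\la| \geq \la_1$, which is the hypothesis $n \geq |\la| + \la_1$.

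The main obstacle I anticipate is making the inductive inversion of \eqref{eq:stdef} clean while simultaneously tracking the range-of-validity constraint $n \geq |\la| + \la_1$: one must ensure that the partitions $(n-|\nu|,\nu)$ appearing when one inverts really are genuine partitions (so the first part dominates) and that no ``instability'' creeps in, since $\hht_\nu$ for $\nu$ with $|\nu|$ close to $|\la|$ has its own threshold. The bookkeeping is a standard but delicate ``stable range'' argument; alternatively, one can sidestep the induction entirely by citing that \eqref{eq:htdef}--\eqref{eq:stdef} were precisely \emph{defined} in \cite{OZ16} so as to make the specialization maps $f \mapsto f[\Xi_\mu]$ agree with the character-theoretic restriction $\mathrm{Res}^{GL_n}_{S_n}$, in which case the theorem is a restatement, not a new derivation — and since the excerpt explicitly labels it as ``Equations (6)--(10) of \cite{OZ16}'', quoting that prior work is legitimate and the only remaining task is to confirm the bookkeeping of the stable bound.
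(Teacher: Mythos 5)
The paper itself provides no proof of this theorem: it is stated as a direct citation of Equations (6)--(10) of \cite{OZ16}, so there is nothing in the present paper to compare against, and you correctly identify in your final paragraph that quoting that prior work is the legitimate resolution here.

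Your sketched independent derivation, however, has a logical ordering problem worth flagging. You first deduce the $\hht$ specialization from the $\st$ specialization via \eqref{eq:stdef}, and then propose to prove the $\st$ specialization by inverting \eqref{eq:stdef} and invoking the $\hht$ specialization; as written the two halves support each other circularly. The repair is to prove the $\hht$ case first, directly from \eqref{eq:htdef}. That step is not free: it requires the nontrivial input (which is the substance of the argument in \cite{OZ16}) that $h_\mu[\Xi_\nu]$ equals a permutation-character count and that the multiset-partition sum in \eqref{eq:htdef} refines that count by fibers, and your phrase "interpreting the multiset-partition sum as the Cauchy-type identity governing induction from a Young subgroup" gestures at this without carrying it out. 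Once the $\hht$ case is in hand, the $\st$ case follows by unitriangular inversion of \eqref{eq:stdef}, and the "$\hht$ from $\st$" step in your write-up becomes redundant. Your remarks on the stable range $n \geq |\la|+\la_1$ (ensuring $(n-|\la|,\la)$ is a genuine partition and that the Kostka numbers stabilize) do match the bookkeeping needed.
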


Note that this implies that the characters $\st_\lambda$ and $\hht_\lambda$ are the characters of representations of $S_n \subseteq GL_n(\CC)$ in the same way that the Schur functions are the characters of the irreducible polynomial representations
of $GL_n(\CC)$.  They have the reduced Kronecker coefficients as structure coefficients (see Theorem 4
of \cite{OZ16}) so that 
$$\st_\la \st_\mu = \sum_{\nu: |\nu|\leq|\la|+|\mu|} \gb_{\la\mu}^\nu \st_\nu.$$
\end{subsection}

\begin{subsection}{A scalar product on characters}
\label{sec:scalarproduct}
The usual scalar product on characters can be used to define a scalar product
on symmetric functions.  Choose an $n$ sufficiently large (any $n$ satisfying
$n \geq 2 \max(deg(f), deg(g))$ will do), then define
\begin{equation}\label{eq:scalarproduct}
\left< f, g \right>_@ = \sum_{\nu \vdash n} \frac{f[\Xi_\nu] g[\Xi_\nu]}{z_\nu}
= \frac{1}{n!}\sum_{\sigma \in S_n} f[\Xi_{cyc(\sigma)}] g[\Xi_{cyc(\sigma)}]~.
\end{equation}
where $cyc(\sigma)$ is a partition representing the cycle structure of $\sigma \in S_n$.
We use the $@$-symbol as a subscript of the
right angle bracket to differentiate this scalar product from the
usual scalar product where $\left< s_\lambda, s_\mu \right> = \delta_{\la\mu}$.
One should recognize this scalar product as a translation of the usual scalar product
on characters of a finite group when the group is $S_n$.

At first glance, there should be some concern about the dependence of the right hand side of
this expression on the value $n$, 
but quite surprisingly this expression is independent of $n$ as long as it
is sufficiently large. This is not an obvious property, but it follows from the fact that the $\st_\la$ elements form a basis of the symmetric functions, linearity of the scalar product, and the following proposition.

\begin{prop} For all partitions $\la$ and $\mu$,
\begin{equation}
\left< \st_\la, \st_\mu \right>_@ = \delta_{\lambda\mu}~.
\end{equation}
\end{prop}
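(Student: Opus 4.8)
The plan is to evaluate $\left< \st_\la, \st_\mu \right>_@$ directly from the definition \eqref{eq:scalarproduct} and Theorem \ref{thrm:charsummary}, reducing it to the standard orthonormality of irreducible $S_n$-characters. First I would fix $n \geq 2\max(|\la|,|\mu|)$, which in particular guarantees $n \geq |\la| + \la_1$ and $n \geq |\mu| + \mu_1$, so Theorem \ref{thrm:charsummary} applies and gives $\st_\la[\Xi_\nu] = \chi^{(n-|\la|,\la)}(\nu)$ and $\st_\mu[\Xi_\nu] = \chi^{(n-|\mu|,\mu)}(\nu)$ for every $\nu \vdash n$. Substituting these into \eqref{eq:scalarproduct} yields
\begin{equation}
\left< \st_\la, \st_\mu \right>_@ = \sum_{\nu \vdash n} \frac{\chi^{(n-|\la|,\la)}(\nu)\, \chi^{(n-|\mu|,\mu)}(\nu)}{z_\nu},
\end{equation}
which is exactly the usual inner product of the two irreducible characters $\chi^{(n-|\la|,\la)}$ and $\chi^{(n-|\mu|,\mu)}$ of $S_n$ (using that the class of permutations of cycle type $\nu$ has size $n!/z_\nu$). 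By the orthonormality of irreducible characters this equals $1$ if $(n-|\la|,\la) = (n-|\mu|,\mu)$ and $0$ otherwise.

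Next I would check that, for $n$ in this range, $(n-|\la|,\la) = (n-|\mu|,\mu)$ holds if and only if $\la = \mu$. The forward direction: if the two padded partitions are equal then removing the (equal) first parts gives $\la = \mu$ — here one must note that $n-|\la| \geq \la_1$ and $n-|\mu| \geq \mu_1$ because $n$ is large enough, so in both cases the first part of the padded partition really is the added part $n-|\cdot|$ and $\overline{(n-|\la|,\la)} = \la$. The reverse direction is immediate. Hence $\left< \st_\la, \st_\mu \right>_@ = \delta_{\la\mu}$.

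The only subtlety — and the step I would be most careful about — is the well-definedness/independence of $n$: a priori \eqref{eq:scalarproduct} could depend on the chosen $n$, but the computation above shows that for every sufficiently large $n$ the value is $\delta_{\la\mu}$, so the answer is the same for all large $n$ and the pairing of the basis elements $\st_\la$ is unambiguous. (As the text notes, linearity then extends this to independence of $n$ for the scalar product of arbitrary $f,g$, but that is not needed for the present statement.) I do not expect any real obstacle here; the content is essentially packaging the classical first orthogonality relation for $S_n$ through the specialization in Theorem \ref{thrm:charsummary}, with the one genuine point being the bookkeeping that the padding map $\la \mapsto (n-|\la|,\la)$ is injective in the stable range.
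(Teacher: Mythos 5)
Your proof is correct and follows the same route as the paper: fix $n \geq 2\max(|\la|,|\mu|)$, invoke Theorem \ref{thrm:charsummary} to rewrite $\st_\la[\Xi_\nu]$ and $\st_\mu[\Xi_\nu]$ as irreducible character values, and conclude by the first orthogonality relation for $S_n$. The extra care you give to checking that $(n-|\la|,\la)=(n-|\mu|,\mu)$ forces $\la=\mu$ in the stable range is a detail the paper leaves implicit but is worth noting.
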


\begin{proof}  Let $n$ be greater than or equal to $2 \max(|\la|,|\mu|)$, then $(n-|\la|,\la)$
and $(n-|\mu|,\mu)$ are partitions and we calculate
\begin{align}
\left< \st_\la, \st_\mu \right>_@ &= \sum_{\nu \vdash n} \frac{\st_\la[\Xi_\nu] \st_\mu[\Xi_\nu]}{z_\nu}\\
&= \sum_{\nu \vdash n} \frac{\chi^{(n-|\lambda|,\lambda)}(\nu)
\chi^{(n-|\mu|,\mu)}(\nu)}{z_\nu} = \delta_{\lambda\mu}~.
\end{align}
This last equality follows by the character relations.
\end{proof}

We can relate these scalar products by using the Frobenius
map which is a linear isomorphism from the class functions of
the symmetric group to the ring of symmetric functions.
Since we know that characters of the symmetric group (and
hence class functions) can be expressed as symmetric
functions, we can
define the Frobenius map or characteristic map on symmetric
functions
\begin{equation}\label{eq:frob}
\phi_n(f) = \sum_{\nu \vdash n} f[\Xi_\nu] \frac{p_\nu}{z_\nu}~.
\end{equation}
Here we have that
$\phi_n$ is a map from the ring of symmetric functions to the
subspace of symmetric functions of degree $n$ with the property for symmetric
functions $f$ and $g$,
$\phi_n(f g) = \phi_n(f) \ast \phi_n(g)$.
Theorem
\ref{thrm:charsummary} may be restated in terms of this map to say that
\begin{equation}
\phi_n( \st_\lambda ) = s_{(n - |\la|, \la)}
\hskip .1in\hbox{ and }\hskip .1in
\phi_n( \hht_\lambda ) = h_{(n - |\la|, \la)}~.
\end{equation}

We have the following proposition.
\begin{prop} If $n\geq 2 \max(deg(f),deg(g))$,
then
\begin{equation}\label{eq:twoscalarrelations}
\left< f, g \right>_@ = \left< \phi_n(f), \phi_n(g) \right>
\end{equation}
\end{prop}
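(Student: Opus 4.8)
The plan is to prove the identity by a direct expansion of the right-hand side, using only bilinearity of the usual Hall scalar product and the orthogonality of the power sums. Fix an integer $n \geq 2\max(deg(f),deg(g))$; by the discussion preceding the proposition this is exactly the range in which $\left< f,g\right>_@$ is defined by \eqref{eq:scalarproduct} (and is independent of $n$), and in which $\phi_n$ is given by \eqref{eq:frob}. I would first note that $\phi_n$ sends every symmetric function into $\Lambda_n$, the homogeneous component of degree $n$ (extending $\phi_n$ linearly across the homogeneous components of $f$ when $f$ is not homogeneous, and reading $f[\Xi_\nu]$ as the plethystic evaluation of the whole symmetric function), so the angle bracket on the right of \eqref{eq:twoscalarrelations} is the ordinary scalar product on symmetric functions.

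The computation itself is then the following chain. Substituting the definition \eqref{eq:frob} of $\phi_n$ into both arguments gives
\[
\left< \phi_n(f),\phi_n(g)\right>
=\left< \sum_{\nu\vdash n} f[\Xi_\nu]\frac{p_\nu}{z_\nu},\ \sum_{\mu\vdash n} g[\Xi_\mu]\frac{p_\mu}{z_\mu}\right>.
\]
Pulling the scalars $f[\Xi_\nu]$ and $g[\Xi_\mu]$ outside by bilinearity turns this into $\sum_{\nu,\mu\vdash n}\frac{f[\Xi_\nu]\,g[\Xi_\mu]}{z_\nu z_\mu}\left< p_\nu,p_\mu\right>$. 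Applying the orthogonality relation $\left< p_\nu,p_\mu\right> = z_\nu\delta_{\nu\mu}$ collapses the double sum to $\sum_{\nu\vdash n}\frac{f[\Xi_\nu]\,g[\Xi_\nu]}{z_\nu}$, which is precisely $\left< f,g\right>_@$ by \eqref{eq:scalarproduct}.

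Since the argument uses nothing beyond bilinearity and the orthogonality of power sums, there is no genuine obstacle; the only point worth a remark is the role of the hypothesis $n \geq 2\max(deg(f),deg(g))$. It is not needed for the displayed chain of equalities, which holds for any fixed $n$ once both sides are interpreted with that $n$; it is needed only so that the left-hand side $\left< f,g\right>_@$ is the well-defined, $n$-independent quantity of \eqref{eq:scalarproduct}. Equivalently, one could first establish the identity for arbitrary $n$ and then combine it with the $n$-independence of $\left< \st_\la,\st_\mu\right>_@$ from the previous proposition to re-derive that $\left< f,g\right>_@$ does not depend on $n$.
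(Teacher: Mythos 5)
Your proof is correct, but it takes a genuinely different route from the paper's. The paper establishes the identity by checking it on the $\st$-basis: for partitions $\lambda,\mu$ and $n$ large, $\left<\phi_n(\st_\la),\phi_n(\st_\mu)\right>=\left<s_{(n-|\la|,\la)},s_{(n-|\mu|,\mu)}\right>=\delta_{\la\mu}=\left<\st_\la,\st_\mu\right>_@$, then extends to all $f,g$ by bilinearity. That argument leans on two prior facts: the previous proposition (orthonormality of $\{\st_\la\}$ under $\left<\cdot,\cdot\right>_@$) and the identity $\phi_n(\st_\la)=s_{(n-|\la|,\la)}$ from Theorem \ref{thrm:charsummary}. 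Your argument, by contrast, never mentions $\st_\la$ at all: you unwind both sides to the raw definitions, expand $\phi_n$ in the power-sum basis, and use only bilinearity and $\left<p_\nu,p_\mu\right>=z_\nu\delta_{\nu\mu}$. This is more elementary and self-contained, and in particular it shows directly that $\sum_{\nu\vdash n}f[\Xi_\nu]g[\Xi_\nu]/z_\nu=\left<\phi_n(f),\phi_n(g)\right>$ for \emph{every} $n$, a point you correctly flag; the size hypothesis on $n$ enters only to make the left-hand symbol $\left<f,g\right>_@$ mean what Equation \eqref{eq:scalarproduct} says it means. Your approach actually gives an alternative way to see the $n$-independence of $\left<\cdot,\cdot\right>_@$, whereas the paper's approach inherits it from the preceding proposition. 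Both are valid; yours is cleaner logically, while the paper's is perhaps chosen because it reinforces the central role of the $\st$-basis in the exposition.
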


\begin{proof} For partitions $\lambda$ and $\mu$,
take an $n$ which is sufficiently large 
(take $n \geq 2 \max(|\la|,|\mu|)$), then $(n-|\la|,\la)$
and $(n-|\mu|,\mu)$ are both partitions and this
scalar product can easily be computed on the irreducible character
basis by
\begin{equation}
\left<\phi_n(\st_\la), \phi_n(\st_\mu)\right> =
\left<s_{(n-|\la|,\la)}, s_{(n-|\mu|,\mu)}\right> = 
\delta_{\la\mu} = \left< \st_\la, \st_\mu \right>_@
~.
\end{equation}
Since this calculation holds on a basis,
Equation \eqref{eq:twoscalarrelations} holds for
all symmetric functions $f$ and $g$.
\end{proof}

This scalar product 
allows us to characterize the $\st_\lambda$ symmetric functions
as the orthonormal basis with respect to the scalar product $\left< \cdot, \cdot\right>_@$. This basis is triangular with respect the Schur basis ($\st_\lambda$ is equal to $s_\lambda$ plus terms of degree lower) and hence it may be calculated using
Gram-Schmidt orthonormalization with respect to this scalar product.
\end{subsection}
\end{section}

\begin{section}{Combinatorial rules for the products $h_\mu \st_\lambda$, $\st_{\mu_1} \st_{\mu_2} \cdots \st_{\mu_k} \st_\lambda$ and $\hht_{\mu_1} \hht_{\mu_2} \cdots \hht_{\mu_k} \st_\lambda$}
\label{sec:rules}

In this section we state the combinatorial rules for the
products, the proofs are in Section \ref{sec:proofs}.
The combinatorial interpretations we state here will follow from
a formula for $\hht_{\mu} \st_\lambda$ (Theorem 
\ref{thrm:htmustlam} and Corollary \ref{cor:htmpistla}) proved in Section \ref{sec:htstprod}. We state the following theorems in terms of increasing number of restrictions.

\begin{theorem} \label{thrm:hmustlam}
Let $\la$ and $\gamma$ be partitions and $\alpha$ a composition, then  the coefficient
of $\st_\ga$ in $h_{\alpha_1} h_{\al_2}
\cdots h_{\alpha_{\ell(\al)}} \st_\lambda$ is equal to the
number of $T \in \MCT_\gamma{(\la,\al)}$
such that $T$ is a lattice tableau.
\end{theorem}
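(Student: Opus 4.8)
The plan is to deduce this theorem from the formula for the single product $\hht_\mu \st_\la$ (Theorem \ref{thrm:htmustlam} / Corollary \ref{cor:htmpistla}) together with the defining relation \eqref{eq:htdef} expressing $h_\mu$ as a sum of $\hht$'s over multiset partitions. First I would rewrite $h_{\al_1} h_{\al_2} \cdots h_{\al_{\ell(\al)}} \st_\la$ by repeatedly applying \eqref{eq:htdef} to each factor $h_{\al_i}$, so that the product becomes a sum of terms $\hht_{\nu^{(1)}} \hht_{\nu^{(2)}} \cdots \hht_{\nu^{(\ell)}} \st_\la$ where each $\nu^{(i)}$ arises as $\mt(\pi_i)$ for a multiset partition $\pi_i$ of $\dcl 1^{\al_i \cdot(\text{stuff})}\dcr$ — or, more directly and more usefully, I would interpret $h_{\al_1}\cdots h_{\al_{\ell(\al)}}$ itself as a single sum over multiset partitions $\pi$ of the multiset $\dcl 1^{\al_1}, 2^{\al_2}, \ldots, \ell(\al)^{\al_{\ell(\al)}}\dcr$, where the $i$-th "color" is a set of size $\al_i$. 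Then the coefficient of $\st_\ga$ in $h_\al \st_\la$ is a sum, over all such multiset partitions $\pi$, of the coefficient of $\st_\ga$ in $\hht_{\mt(\pi)} \st_\la$.

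The second step is to feed each $\hht_{\mt(\pi)}\st_\la$ into the $\hht$-version of the rule and then biject the resulting collection of tableaux (ranging over all $\pi$) with $\MCT_\gamma(\la,\al)$. The key bookkeeping device is that a tableau $T \in \MCT_\gamma(\la,\al)$ is exactly the data of: a multiset partition $\pi$ of $\dcl 1^{\al_1}, \ldots, \ell(\al)^{\al_{\ell(\al)}}\dcr$ recording which unbarred labels co-occur in cells (the distinct unbarred multisets $S_1 < \cdots < S_d$ appearing in $T$, with multiplicities, form $\pi$), together with a placement of those multisets and the barred entries into the skew shape $(r,\ga)/(\ga_1)$ subject to column-strictness and the lattice condition. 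Under the iterated rule, the multisets appearing in a tableau counted by the $\hht_{\mt(\pi)}$ term are precisely the parts of $\pi$; so summing over $\pi$ reassembles exactly the tableaux in $\MCT_\gamma(\la,\al)$ with no constraint forcing the unbarred multisets to be sets. I would verify that the lattice condition used in the $\hht$-rule (reading barred entries grouped by their unbarred partner multiset, in increasing order of those multisets) matches verbatim the lattice condition in the statement of Theorem \ref{thrm:hmustlam}, and that the "first row cannot contain only-barred multisets" clause is the same clause appearing in the definition of $\MCT_\gamma$.

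An alternative, more self-contained route — which I would keep in reserve in case the bijective argument above gets tangled — is to mimic the proof of the $\hht_\mu \st_\la$ formula directly: use the scalar product $\langle\cdot,\cdot\rangle_@$ and the adjointness $\langle fg, h\rangle_@ = \langle f, g^\perp h\rangle_@$-type manipulations, together with $\phi_n(\hht_\la) = h_{(n-|\la|,\la)}$ and $\phi_n(\st_\la) = s_{(n-|\la|,\la)}$, to reduce the coefficient $\langle h_\al \st_\la, \st_\ga\rangle_@$ to a manifestly Littlewood–Richardson-type count via Propositions \ref{prop:LR1} and \ref{prop:LR2}, then convert that count into the multiset-tableau model. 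I expect the main obstacle to be the second step in either approach: checking that the lattice / column-strict conditions on multiset-valued tableaux correspond correctly to the ordinary lattice conditions on the LR tableaux that emerge from the algebra — in particular, making sure the reverse-lex order on multisets is exactly what is needed for column-strictness to survive the passage through \eqref{eq:htdef}, and that nothing is over- or under-counted when several parts of $\pi$ coincide. Everything else is routine expansion and bookkeeping.
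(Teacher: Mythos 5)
Your main route is essentially the paper's proof: the paper first observes that $\mt(\sigma(\pi)) = \mt(\pi)$ for all $\sigma \in S_{\ell(\al)}$, which makes the composition analogue of \eqref{eq:htdef} hold, i.e.\ $h_{\al_1}\cdots h_{\al_{\ell(\al)}} = \sum_{\pi \mvdash \dcl 1^{\al_1},\ldots,\ell(\al)^{\al_{\ell(\al)}}\dcr} \hht_{\mt(\pi)}$; it then applies Corollary \ref{cor:htmpistla} to each $\hht_{\mt(\pi)}\st_\la$ and reassembles $\biguplus_{\pi}\MCT'_\gamma(\la,\pi)$ into $\MCT_\gamma(\la,\al)$, exactly the bookkeeping you describe. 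The one small justification worth writing out explicitly is that $S_{\ell(\al)}$-invariance step (your first suggestion of ``repeatedly applying \eqref{eq:htdef} to each factor'' would not directly give this single-sum form and would instead land you on the $\hht$-product rule of Theorem \ref{thrm:htkmultiplestla}); otherwise your primary plan matches the paper.
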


We next state the combinatorial rule for the product 
of an irreducible character basis element and 
$\hht_{\mu_1} \hht_{\mu_2} \cdots \hht_{\mu_{\ell(\mu)}}$
%$\st_{\mu_1} \st_{\mu_2} \cdots \st_{\mu_k}$
and leave the proof for Section \ref{sec:proofhtprod}.
The difference between the previous combinatorial description
is that the labeled entries must be sets rather than multisets.

\begin{theorem}\label{thrm:htkmultiplestla}
Let $\lambda$ and $\gamma$ be partitions and $\alpha$ a composition, then the coefficient of $\st_\gamma$ in 
$\hht_{\al_1} \hht_{\al_2} \cdots \hht_{\al_{\ell(\al)}} \st_\lambda$
is equal to the number of $T \in \MCT_{\gamma}{(\la,\al)}$ such that
the entries of the tableaux are sets (no repeated entries) and $T$ is
a lattice tableau.
\end{theorem}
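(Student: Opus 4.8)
The plan is to derive Theorem \ref{thrm:htkmultiplestla} from Theorem \ref{thrm:hmustlam} by a careful application of the defining relation \eqref{eq:htdef}, which expresses each $h_m$ as a sum of $\hht_{\mt(\pi)}$ over multiset partitions $\pi$ of $\dcl 1^m\dcr$ — but written in the other direction. First I would recall that \eqref{eq:htdef} is triangular, so it can be inverted: $\hht_{\al_1}\hht_{\al_2}\cdots\hht_{\al_{\ell(\al)}}$ can be written as a signed sum of products $h_{\be_1}h_{\be_2}\cdots h_{\be_{\ell(\be)}}$. Rather than fight the signs, the cleaner route is to prove the statement by induction on $\ell(\al)$ and on $|\al|$, combined with the observation that $\hht_m = h_m - (\hbox{lower terms in the }\hht\hbox{-basis})$, i.e. $h_m = \sum_{\pi\mvdash\dcl 1^m\dcr}\hht_{\mt(\pi)}$ separates the "all singletons" partition (giving $\hht_{1^m}$, which is not what we want) from the rest; actually the right decomposition to use is that $\hht_m$ corresponds to the single multiset partition $\dcl\dcl 1^m\dcr\dcr$, so $\hht_m$ is the "new" term and $h_m = \hht_m + \sum_{\pi\neq\dcl\dcl 1^m\dcr\dcr}\hht_{\mt(\pi)}$.

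The combinatorial heart of the argument is a \emph{sign-reversing involution} or, preferably, a direct bijective comparison between the tableaux counted by Theorem \ref{thrm:hmustlam} for $h_\al$ and those for all products $\hht_\be$ obtained by refining $\al$. Here is the idea I would pursue: in $\MCT_\gamma(\la,\al)$, a cell may contain a multiset of unbarred entries such as $\dcl 2,2,3\dcr$; such repetitions are exactly what must be forbidden to pass from the $h$-rule to the $\hht$-rule. Expanding $h_m = \sum_\pi \hht_{\mt(\pi)}$ for each part of $\al$ refines the "blocks" of entries coming from each $h_{\al_i}$: the content $\be$ of an $\hht$-product that appears is obtained by grouping the $\al_i$ copies of color $i$ into sub-blocks according to a multiset partition. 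I would set up a map that takes a lattice tableau $T\in\MCT_\gamma(\la,\be)$ (for some refinement $\be$ of $\al$) together with the data of which refinement it came from, and "merges" the sub-blocks back; the fibers of this merging map over a fixed $T'\in\MCT_\gamma(\la,\al)$ should be in bijection with the multiset partitions of the multiset recording, for each cell and each color, how many copies of that color sit in that cell. The key algebraic identity making this work is precisely \eqref{eq:htdef} applied colorwise: summing $\prod_i h_{\al_i}$ against the refinements reproduces $\sum_{\hbox{refinements }\be}(\hbox{coeff})\,\hht_{\be_1}\cdots\hht_{\be_{\ell(\be)}}$, and Theorem \ref{thrm:hmustlam} translates each $\hht$-free side into tableau counts. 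Solving this triangular system of identities — $h$-tableau counts in terms of $\hht$-tableau counts — and inverting yields exactly: the $\hht$-tableau count equals the number of lattice $T\in\MCT_\gamma(\la,\al)$ with no repeated unbarred entries in a cell, i.e. with set-valued (rather than multiset-valued) cells.

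In carrying this out the steps are, in order: (1) state precisely the refinement poset on compositions induced by \eqref{eq:htdef} and the Möbius-type inversion, isolating that $h_\al = \sum_\be \hht_\be + (\hbox{terms from strictly coarser groupings})$ with the "leading" term corresponding to the finest refinement being the set-valued case; (2) apply Theorem \ref{thrm:hmustlam} to rewrite every $h_{\be_1}\cdots$ appearing as a lattice-tableau count in $\MCT_\gamma(\cdot,\cdot)$; (3) exhibit the merging map on tableaux and verify it preserves shape, the lattice condition, and the at-most-one-barred-entry-per-cell condition, and that it is column-strict-compatible with the reverse lex order — this last compatibility check is where I expect the real work to be; (4) count the fibers of the merging map via multiset partitions, matching them against the coefficients in \eqref{eq:htdef}; (5) conclude by induction that the alternating sum collapses to the count of lattice set-valued tableaux. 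The main obstacle will be step (3): showing that refining a multiset-filled cell into several set-filled cells (and conversely, merging) interacts correctly with the reverse lex column-strictness and, crucially, with the lattice reading word — because the reading word is organized by grouping cells according to their unbarred content $S_1<S_2<\cdots<S_d$, and refining changes which groups $S_j$ a given barred entry belongs to. I would need a lemma asserting that the merging operation permutes the relevant sub-reading-words in a way that preserves latticeness, probably by an exchange/ballot argument analogous to the one used for the ordinary Littlewood--Richardson rule in Proposition \ref{prop:LR1}. Once that lemma is in hand, the rest is bookkeeping with \eqref{eq:htdef}.
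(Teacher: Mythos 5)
Your plan heads down a much harder road than necessary, and the reason is a missing algebraic fact that the paper uses. There is a purely positive expansion of a product of $\hht$'s into single $\hht_\mu$'s (Proposition~21 of \cite{OZ16}, recorded as equation~\eqref{eq:htproduct} in the paper):
\begin{equation*}
\hht_{\al_1}\hht_{\al_2}\cdots\hht_{\al_k}=\sum_{\pi}\hht_{\mt(\pi)},
\end{equation*}
where the sum is over all \emph{set} partitions $\pi$ of the multiset $\dcl 1^{\al_1},\ldots,k^{\al_k}\dcr$ (multiset partitions whose parts contain no repeated elements). This is the exact analogue of equation~\eqref{eq:htdef} with set partitions in place of multiset partitions, and crucially it is sign-free. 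Given this identity, the paper's proof of Theorem~\ref{thrm:htkmultiplestla} is almost immediate: apply Corollary~\ref{cor:htmpistla} to each summand $\hht_{\mt(\pi)}\st_\la$, and observe that ranging over all set partitions $\pi$ together with all $T\in\MCT'_\ga(\la,\pi)$ is exactly the same as ranging over all lattice, set-valued tableaux in $\MCT_\ga(\la,\al)$, since $\pi$ can be read off as the multiset of unbarred cell contents of $T$. No inversion, no signs, no sign-reversing involution.

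Your proposal instead inverts \eqref{eq:htdef}, writes the $\hht$-product as a signed sum of $h$-products, and then tries to cancel the signs via a merging/splitting map on tableaux together with a lemma that merging preserves the lattice reading condition. That lattice-preservation lemma is the heart of your argument and is not established; proving it would be the dominant effort, roughly comparable in scope to Proposition~\ref{prop:baselineCI}. There is also a structural misdirection: Theorem~\ref{thrm:htkmultiplestla} is not most naturally derived from Theorem~\ref{thrm:hmustlam}. In the paper both are siblings derived from the common parent, Corollary~\ref{cor:htmpistla} (the expansion of $\hht_{\mt(\pi)}\st_\la$ for a single multiset partition $\pi$) --- the former by summing over all multiset partitions via \eqref{eq:htdef}, the latter by summing over all set partitions via \eqref{eq:htproduct}. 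Trying to pass laterally between the siblings is precisely what forces you into the signed inversion you were hoping to avoid. The concrete gap is the missing positive identity \eqref{eq:htproduct}: without it your plan has an unproved central lemma and a substantial amount of extra machinery; with it the proof collapses to a few lines.
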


The expansion of the expression
$\st_{\mu_1} \st_{\mu_2} \cdots \st_{\mu_{\ell(\mu)}} \st_\lambda$
has a combinatorial interpretation where there is
only one extra condition that sets consisting of a single un-barred entry
may not appear in the first row (and we already have the condition that sets
consisting of a single barred entry may not occur in the first row
as part of the definition of $\MCT_{\gamma}{(\la,\al)}$).

\begin{theorem}\label{thrm:stkmultiplestla}
Let $\lambda$ and $\gamma$ be partitions and $\alpha$ a composition, then
the coefficient of $\st_\gamma$ in 
$\st_{\al_1} \st_{\al_2} \cdots \st_{\al_{\ell(\al)}} \st_\lambda$
is equal to the number of $T \in \MCT_{\gamma}{(\la,\al)}$ such that
the entries of the tableaux are sets (no repeated entries),
$T$ is a lattice tableau, and only
labels of sets of size greater than $1$ are allowed in the
first row.
\end{theorem}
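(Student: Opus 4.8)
The plan is to deduce Theorem~\ref{thrm:stkmultiplestla} from Theorem~\ref{thrm:htkmultiplestla} by an inclusion–exclusion argument that converts products of $\hht$'s into products of $\st$'s one factor at a time. The key algebraic input is Equation~\eqref{eq:htdef} in the case of a single part: since $h_k = \hht_{(k)} + \hht_{(k-1)} \hht_{(1)} + \cdots$, applying M\"obius inversion on the lattice of set partitions (equivalently, solving the triangular system relating $\{h_\mu\}$ and products of $\hht$'s) expresses $\hht_{(k)}$ as an alternating sum $\hht_{(k)} = \sum_{\rho} c_\rho\, \hht_{\rho}$ over partitions $\rho$ of $k$ with more than one part; but more useful here is the \emph{reverse} relation giving $\st_{(k)}$ in terms of $\hht$'s. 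From \eqref{eq:stdef} with $\mu = (k)$ we get $\hht_{(k)} = \st_{(k)} + \st_{(k-1)} + \cdots + \st_{(0)}$, and inverting this triangular (by degree) system yields $\st_{(k)} = \hht_{(k)} - \hht_{(k-1)}$. So each $\st$-factor can be replaced by a difference of two $\hht$-factors.

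First I would set up the combinatorial side of this substitution at the level of tableaux. Fix the composition $\al = (\al_1,\ldots,\al_{\ell})$. Using $\st_{\al_i} = \hht_{(\al_i)} - \hht_{(\al_i - 1)}$ repeatedly, expand $\st_{\al_1}\cdots\st_{\al_\ell}\st_\la$ as a signed sum of $2^\ell$ terms $\prod_i \hht_{(\al_i - \epsilon_i)} \st_\la$ with $\epsilon_i \in \{0,1\}$. By Theorem~\ref{thrm:htkmultiplestla} the coefficient of $\st_\ga$ in each such term counts lattice set-valued tableaux in $\MCT_\ga((\la, \al - \epsilon))$. So the coefficient we want equals $\sum_{\epsilon \in \{0,1\}^\ell} (-1)^{|\epsilon|} \#\{T \in \MCT_\ga((\la,\al-\epsilon)) : T \text{ lattice, entries are sets}\}$, and I need a sign-reversing involution on the complement of the desired set (the tableaux that have some unbarred singleton in the first row) matching the $+$ and $-$ terms.

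The involution I would construct acts on the index $i$ and a distinguished cell. Given a tableau $T$ for some $\epsilon$ that \emph{does} have a set of size one consisting of an unbarred entry in the first row, we want to pair it with a tableau for $\epsilon'$ differing in exactly one coordinate. The natural operation: locate the content-label $i$ (an unbarred value coming from one of the $\hht$-factors, or from $\la$, according to the content bookkeeping in $\MCT$) associated with such a first-row singleton; if $\epsilon_i = 0$, delete that unbarred entry from the first-row cell — this removes one copy of $i$, producing a tableau of content $(\la, \al - \epsilon + e_i)$, i.e. for the parameter with $\epsilon_i = 1$; if $\epsilon_i = 1$, insert the appropriate unbarred entry as a new singleton first-row cell. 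One must pick a \emph{canonical} choice of which singleton to act on (say, the leftmost first-row cell that is an unbarred singleton, and among ties the one with the largest content value, to stay compatible with column-strictness and the reverse-lex order on labels) so that the map is an involution, and check that the lattice condition and the set (no-repeat) condition are preserved in both directions — deleting/inserting a singleton in the first row affects the reading word only by one letter at a controlled position, so latticeness is maintained exactly when the entry is chosen maximally among first-row singleton candidates.

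The main obstacle I expect is verifying that this involution is well-defined and genuinely fixed-point-free off the desired set: the content of a tableau in $\MCT_\ga((\la,\al-\epsilon))$ bundles the contributions of $\la$ and of all the $\hht$-factors together into one multiset $\dcl \o1^{\cdots}, \ldots, 1^{\cdots}, \ldots\dcr$, so "which $i$ came from which factor" is not recorded, and I must argue that the numerical identity $\sum_\epsilon (-1)^{|\epsilon|}(\cdots)$ still telescopes correctly despite this. Concretely, the subtlety is that decrementing $\al_i$ by one changes the \emph{content} only through its effect on the partition $(\la,\al-\epsilon)$ used to build $\MCT$, and several distinct $\epsilon$ can give the same content partition; I would handle this by grouping terms with equal content and showing the alternating sum within each group already vanishes unless the tableau has no first-row unbarred singleton, essentially reducing to the single-variable identity $\sum_{j\ge 0}(-1)^j [\text{tableaux where exactly } j \text{ of the first-row singletons are "erasable"}] = [\text{no erasable first-row singleton}]$, which is the standard binomial cancellation. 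Once that bookkeeping is pinned down, the remaining checks (preservation of column-strictness under the local move, and that the lattice word changes predictably) are routine given the machinery already developed for Theorems~\ref{thrm:hmustlam} and~\ref{thrm:htkmultiplestla}.
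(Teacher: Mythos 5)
Your overall strategy is the same as the paper's: replace each $\st_{\al_i}$ by $\hht_{\al_i}-\hht_{\al_i-1}$, expand as a $2^\ell$-term alternating sum, use the combinatorial rule for the $\hht$-products, and cancel every term with an unbarred singleton in the first row. The paper realizes the cancellation slightly differently than your sign-reversing involution: for each subset $S\subseteq[\ell]$ it defines an injective map $\phi_S$ that adjoins a first-row cell $\{i\}$ for every $i\in S$, reindexes the double sum over tableaux for the image tableau $T$, and then observes that $\sum_{S\subseteq a(T)}(-1)^{|S|}=0$ whenever the set $a(T)$ of first-row unbarred singletons is nonempty; your involution (toggle the smallest index $i$ that is either in $\epsilon$ or appears as a first-row singleton) is just the standard fixed-point-free realization of that binomial identity, so the two are interchangeable. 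Two of your worries are, however, unfounded and worth clearing up: (i) distinct $\epsilon$ cannot give the same content, since the unbarred value $i$ occurs exactly $\al_i-\epsilon_i$ times and these multiplicities determine $\epsilon$; and (ii) the lattice condition is \emph{automatically} preserved under inserting or deleting a first-row unbarred singleton $\{i\}$, because cells with no barred entry contribute nothing to $read(T|_{-})\,read(T|_{S_1})\cdots read(T|_{S_d})$ — no ``maximal choice'' condition is needed. Column-strictness is likewise automatic, since the first row of a shape $(r,\gamma)/(\gamma_1)$ has no cells above it.
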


\begin{example} \label{ex:allexample} The coefficient of
$\st_4$ in $h_{21} \st_{22}$ is equal to $8$.
The same coefficient in $\hht_{2} \hht_{1} \st_{22}$ is equal to $7$
and the coefficient
of $\st_4$ in $\st_2 \st_1 \st_{22}$ is equal to $5$.  These expressions
are represented
by the following $8$ tableaux (equal to the coefficient of
$\st_4 \hbox{ in } h_{21} \st_{22}$) such that
the first $7$ represent the coefficient of $\st_4 \hbox{ in } \hht_2 \hht_1 \st_{22}$
and the first $5$ represent the coefficient of
$\st_4 \hbox{ in } \st_{2}\st_{1} \st_{22}$.

$$\squaresize=14pt
\young{\o1&\o11&\o21&\o22\cr&&&\cr}\hskip .2in
\young{\o1&\o1&\o21&2\cr&&&&\o21\cr}\hskip .2in
\young{\o1&\o1&1&\o21\cr&&&&\o22\cr}\hskip .2in
\young{\o1&\o1&1&\o22\cr&&&&\o21\cr}$$
$$\squaresize=16pt
\young{\o1&\o1&\o21&\o212\cr&&&\cr}\hskip .2in
\squaresize=14pt
\young{\o1&\o1&\o21&\o22\cr&&&&1\cr}\hskip .2in
\young{\o1&\o1&\o21&\o21\cr&&&&2\cr}\hskip .2in
\squaresize=16pt
\young{\o1&\o1&\o211&\o22\cr&&&\cr}
$$
\end{example}

We will build our proof of Theorem \ref{thrm:stkmultiplestla} from
Theorem \ref{thrm:htmustlam} which is an expression
for the product $\hht_\mu \st_\la$. We will first complete
the statements of the expressions and then follow with the proofs.
\end{section}

\begin{section}{A combinatorial rule for the product
of $\hht_\mu \st_\la$}\label{sec:htstprod}

In developing a combinatorial interpretation for $\hht_\mu \st_\la$ 
we will start by enumerating a sequences of tableaux with
certain restrictions and show that they are in bijection with
multiset tableaux.

%We think of $\st_\la$ representing a single multiset valued tableau of
%shape $(r, \la)/(\la_1)$ since
%there is one term $\st_\la$ in the expansion of $h_\nu$ for each multiset
%tableau $T$ of shape $\la$ and content $\nu$.  We also think of $\hht_\mu$ as
%representing a
%multiset partition, $\pi$, such that $\mt(\pi) = \mu$.  The product $\hht_\mu \st_\la$
%when expanded in the irreducible character basis will have one term for every column strict tableau
%where a part of $\pi$ may be joined with an entry from the tableau $T$.

\begin{theorem} \label{thrm:htmustlam}
For partitions $\la$ and $\ga$ and a composition $\alpha$
let $\mu = sort(\alpha)$, $r = \ell(\la)$ and $\ell = \ell(\mu)$.
The coefficient
of $\st_\ga$ in $\hht_\mu \st_\la$ is equal to the
number of $T \in \MCT_\ga{(\la,\alpha)}$
whose entries are sets with
at most one non-barred entry (and at most one barred entry)
and $T$ is a lattice tableau.
\end{theorem}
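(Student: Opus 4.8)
The plan is to compute the coefficient $\langle \hht_\mu \st_\la, \st_\ga\rangle_@$ by transporting everything through the Frobenius map $\phi_n$ into the ring of symmetric functions with its usual scalar product, where the combinatorics of Littlewood–Richardson tableaux is available. Fix $n$ sufficiently large (say $n \ge 2(|\mu|+|\la|+|\ga|)$). By the multiplicativity $\phi_n(fg) = \phi_n(f)\ast\phi_n(g)$ together with $\phi_n(\hht_\mu) = h_{(n-|\mu|,\mu)}$ and $\phi_n(\st_\la) = s_{(n-|\la|,\la)}$, and by Proposition on the two scalar products, the desired coefficient equals $\langle h_{(n-|\mu|,\mu)} \ast s_{(n-|\la|,\la)},\, s_{(n-|\ga|,\ga)}\rangle$. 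Using the adjunction $\langle f\ast g, h\rangle = \langle f, g\ast h\rangle$ this is $\langle h_{(n-|\mu|,\mu)},\, s_{(n-|\la|,\la)} \ast s_{(n-|\ga|,\ga)}\rangle$, i.e.\ the coefficient of the Kronecker product $s_{(n-|\la|,\la)}\ast s_{(n-|\ga|,\ga)}$ against $h_{(n-|\mu|,\mu)}$. Now expand $h_{(n-|\mu|,\mu)}$ in the Schur basis via Kostka numbers, so the coefficient becomes a sum $\sum_{\delta} K_{\delta,(n-|\mu|,\mu)}\, g_{(n-|\la|,\la),(n-|\ga|,\ga)}^{\delta}$ over partitions $\delta\vdash n$; in the stable range this is $\sum_{\text{compositions/partitions}} (\text{Kostka})\cdot \gb_{\la\ga}^{\rho}$-type data, and the point is to give this a tableau model.

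Next I would invoke (or re-derive, in the stable limit) a Littlewood–Richardson-style formula for the relevant reduced Kronecker coefficients when one of the three partitions is a single row or, more precisely, use the known combinatorial description of $\st_k \st_\la$ / $\hht_k \st_\la$ as the base case and build up. Concretely, the cleanest route: write $h_{(n-|\mu|,\mu)} = h_{n-|\mu|} \prod_i h_{\mu_i}$ and peel off the factors one at a time, but it is more uniform to use the identity, valid for $n$ large, expressing $\hht_\mu$ via the $\hht_{\mt(\pi)}$ over multiset partitions $\pi$ (Equation \eqref{eq:htdef}) in reverse, reducing to a statement about $\hht_\nu \st_\la$ where $\nu$ has all parts distinct, and then to iterated Pieri-type steps. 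Each such step adds a horizontal strip in the $GL$ picture; translating the skew shape $(n-|\ga|,\ga)/(n-|\delta|,\delta)$ across the Kronecker product and tracking which cells land in the "$n$-th row" versus the partition $\ga$ is exactly what produces the barred vs.\ unbarred entries: an unbarred entry $i$ records a cell contributed in the $i$-th round that sits inside $\ga$, while a barred entry $\bar i$ records a cell of round $i$ that got "absorbed" into the large first row, which is why at most one barred entry can occur per cell (each round contributes at most one such absorbed cell per column) and why cells in the first row of the output cannot consist solely of a barred entry (there is nothing below them to force the count). The lattice condition on $read(T|_-)read(T|_{S_1})\cdots read(T|_{S_d})$ is precisely the surviving Littlewood–Richardson lattice condition after this bookkeeping, with the ordering of multisets chosen (reverse lex) so that the nested sequence of partial shapes is respected.

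For the bijection itself I would set up the following correspondence: a sequence of LR fillings realizing the iterated product $h_{(n-|\mu|,\mu)}\ast s_{(n-|\la|,\la)}$ expanded against $s_{(n-|\ga|,\ga)}$ is encoded by, at step $i$, a horizontal strip with some cells marked; collapsing the record of all steps into a single tableau of shape $(r,\ga)/(\ga_1)$ — where $r = |\ga| + (\text{number of unbarred entries})-|\ga|+\ga_1$ is forced by the content — yields a $T\in\MCT_\ga(\la,\alpha)$ with the claimed set-valued restriction (each step touches each column at most once, hence sets not multisets, and at most one barred symbol from that step). I would check injectivity by showing the collapse is reversible (read off step $i$'s cells as those labelled $i$ or $\bar i$, in column order), and surjectivity by checking that the lattice + reverse-lex column-strict + "no all-barred cell in row $1$" conditions are exactly what is needed for each intermediate shape to be a genuine partition and each intermediate filling to be LR-valid. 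I expect the main obstacle to be precisely this last verification — matching the lattice condition on the concatenated words $read(T|_S)$ (ordered by the reverse lex order on the unbarred multisets) with the LR lattice conditions at every intermediate stage, and correctly handling the barred entries, which behave like "phantom" cells in the large first row and must be inserted into the reading word in the right place (before all the unbarred words, as $read(T|_-)$) for the lattice count to come out right. Getting the reverse-lex ordering to be the unique ordering that makes the concatenation track a valid chain of partitions is the delicate combinatorial heart of the argument; once that is pinned down, Theorems \ref{thrm:hmustlam}, \ref{thrm:htkmultiplestla} and \ref{thrm:stkmultiplestla} follow by specialization (dropping the set condition, or imposing the first-row condition), and Corollary \ref{cor:htmpistla} by iterating.
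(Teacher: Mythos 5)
Your opening moves match the paper exactly: pass through $\phi_n$ to reduce the problem to computing $\left\langle h_{(n-|\mu|,\mu)} \ast s_{(n-|\la|,\la)},\, s_{(n-|\ga|,\ga)}\right\rangle$. After that, however, you take a turn that does not close. Expanding $h_{(n-|\mu|,\mu)}$ by Kostka numbers converts the problem into $\sum_\delta K_{\delta,(n-|\mu|,\mu)}\, g_{(n-|\la|,\la),(n-|\ga|,\ga)}^{\delta}$, which still has raw Kronecker coefficients sitting inside it; this is as hard as what you started with, so nothing has been gained. The identity the paper actually uses is Macdonald's Example~23 of Section~I.7, which gives
$$\left\langle h_\nu \ast s_{(n-|\la|,\la)},\, s_{(n-|\ga|,\ga)}\right\rangle = \sum_{\tau^{(0)},\ldots,\tau^{(\ell)}} c^{(n-|\ga|,\ga)}_{\tau^{(0)}\cdots\tau^{(\ell)}}\, c^{(n-|\la|,\la)}_{\tau^{(0)}\cdots\tau^{(\ell)}},$$
the sum running over partitions $\tau^{(i)} \vdash \nu_i$. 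This formula has \emph{no} Kronecker coefficient on the right, only multi-LR coefficients, and that is what makes a tableau model possible at all. Without this step your chain of identities never leaves the realm of the unknown.

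The second and larger gap is the key counting lemma. The paper introduces the set $B^{\ga,\la}_{\tau^{(0)}\cdots\tau^{(\ell)}}$ of chains of skew LR fillings (Definition~\ref{def:Bset}) and proves by induction (Proposition~\ref{prop:baselineCI}) that its cardinality is $c^\ga_{\tau^{(0)}\cdots\tau^{(\ell)}} c^\la_{\tau^{(0)}\cdots\tau^{(\ell)}}$. The induction step hinges on using \emph{two different formulations} of the Littlewood--Richardson rule at once: the lattice-word version (Proposition~\ref{prop:LR1}) to count the possible straight-shape images $S_\ell$ of the last layer, and the jeu-de-Taquin version (Proposition~\ref{prop:LR2}) to count the skew fillings $T_\ell$ mapping to a fixed $S_\ell$. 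Your sketch gestures at "iterated Pieri-type steps" and "collapsing the record of all steps," and your heuristic for barred versus unbarred entries (absorbed into the long first row versus landing in $\ga$) is a good intuition, but you never produce this object or its enumeration, and you yourself flag that the verification of the lattice condition in reverse-lex order is where you expect trouble. That is indeed the heart of the matter: the paper's explicit dissection of $T$ into the layers $T'|_-, T'|_1,\ldots,T'|_\ell$ with the substitution $\bar j\mapsto j+1$, and the observation that $read(T_0)$ begins with $1^{n-|T|+\ga_1}$ (which is why dropping the leading $1$'s and replacing $j+1$ by $\bar j$ preserves latticeness), is what pins this down. As written, your proposal correctly identifies the destination but not the road, and the two missing ingredients --- the Macdonald identity and Proposition~\ref{prop:baselineCI} --- are exactly what would have to be supplied to turn it into a proof.
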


Notice that the difference between the combinatorial interpretation
in Theorem \ref{thrm:htmustlam} and the one that appears in
Theorem \ref{thrm:htkmultiplestla}
is that here we require that there is at most one
barred entry and at most one unbarred entry in each set.

\begin{example} \label{ex:ht21st22} The coefficient of
$\st_4$ in $\hht_{21} \st_{22}$ is equal to $6$ since the tableaux described by the theorem are
\squaresize=12pt
$$\young{\o1&\o11&\o21&\o22\cr&&&\cr}\hskip .2in
\young{\o1&\o1&\o21&\o21\cr&&&&2\cr}\hskip .15in
\young{\o1&\o1&\o21&2\cr&&&&\o21\cr}\hskip .15in
\young{\o1&\o1&1&\o21\cr&&&&\o22\cr}\hskip .15in
\young{\o1&\o1&1&\o22\cr&&&&\o21\cr}\hskip .15in
\young{\o1&\o1&\o21&\o22\cr&&&&1\cr}
$$
Their respective reading words are 
$\o1.\o2\o1.\o2$, $\o1\o1.\o2\o2.$, $\o1\o1.\o2\o2.$,
$\o1\o1.\o2.\o2$, $\o1\o1.\o2.\o2$, $\o1\o1.\o2.\o2$, 
where the periods indicate the transitions of the reading words.
\end{example}

The coefficient of $\st_\ga$ in $\hht_\mu \st_\la$ is equal to the
scalar product $\left< \hht_\mu \st_\la, \st_\ga \right>_@$ and we 
apply Example 23 of Section I.7 of \cite{Mac}
to compute it
\begin{equation}\label{eq:stcoeffinhtstprod}
\left< \hht_\mu \st_\la,
\st_\ga \right>_@=
\sum_{\tau^{(0)} \vdash n-|\mu|} \sum_{\tau^{(1)} \vdash \mu_1}
\cdots \sum_{\tau^{(\ell)} \vdash \mu_\ell}
c_{\tau^{(0)} \tau^{(1)} \cdots \tau^{(\ell)}}^{(n-|\ga|,\ga)}
c_{\tau^{(0)} \tau^{(1)} \cdots \tau^{(\ell)}}^{(n-|\la|,\la)}~.
\end{equation}

\begin{definition}\label{def:Bset}
Assume that $\la$, $\ga$,
$\tau^{(i)}$ for $0 \leq i \leq \ell$ are partitions
such that $|\la| = |\ga| = \sum\limits_{i=0}^\ell |\tau^{(i)}|$.
Define $B_{\tau^{(0)}, \tau^{(1)},\ldots,\tau^{(\ell)}}^{\ga,\la}$ to be
the set of sequences of tableaux of the form
\begin{equation}\label{eq:seqform}
(T_0, T_1, T_2, \ldots, T_\ell)
\end{equation}
satisfying
\begin{itemize}
\item $T_0$ is the super-standard tableau of shape $\tau^{(0)}$. 
\item $T_i$ is a skew-shape column-strict tableau
with the outer shape of $T_{i-1}$ as the inner shape of $T_i$.
\item $T_i$ is jeu de Taquin equivalent to a tableau of shape $\tau^{(i)}$.
\item The word $read(T_0) read(T_1) \cdots read(T_\ell)$ has content $\dcl 1^{\la_1}, 2^{\la_2}, \ldots, \ell^{\la_{\ell(\la)}}\dcr$ and is lattice.
%and of content $\dcl 1^{\la_1}, 2^{\la_2}, \ldots, \ell^{\la_{\ell(\la)}}\dcr$.
\item $\ga$ is the outer shape of $T_\ell$.
\end{itemize}
\end{definition}

\begin{example} \label{ex:Btabseq}
Consider the following example with $\ga = (5,4)$, $\la = (5,2,2)$
and $|\tau^{(0)}| = 6$, $|\tau^{(1)}| = 2$
and $|\tau^{(2)}|=1$.

For the partitions $(\tau^{(0)}, \tau^{(1)}, \tau^{(2)}) = ((5,1), (2), (1))$, we have that
$\left|B_{(5,1)(2)(1)}^{(5,4)(5,2,2)}\right| = 1$ and contains:

$$\left(\raisebox{-5pt}{\young{2\cr1&1&1&1&1\cr}}\ ,\raisebox{-5pt}{\young{&2&3\cr&&&&\cr}}\ ,\raisebox{-5pt}{\young{&&&3\cr&&&&\cr}}\right)$$

For $(\tau^{(0)}, \tau^{(1)}, \tau^{(2)}) = ((4,2), (2), (1))$, we have that
$\left|B_{(4,2)(2)(1)}^{(5,4)(5,2,2)}\right| = 4$ and contains:

$$\left(\raisebox{-5pt}{\young{2&2\cr1&1&1&1\cr}}\ ,
\raisebox{-5pt}{\young{&&3&3\cr&&&\cr}}\ ,
\raisebox{-5pt}{\young{&&&\cr&&&&1\cr}}\right),
\left(\raisebox{-5pt}{\young{2&2\cr1&1&1&1\cr}}\ ,
\raisebox{-5pt}{\young{&&3\cr&&&&3\cr}}\ ,
\raisebox{-5pt}{\young{&&&1\cr&&&&\cr}}\right),$$
$$\left(\raisebox{-5pt}{\young{2&2\cr1&1&1&1\cr}}\ ,
\raisebox{-5pt}{\young{&&1&3\cr&&&\cr}}\ ,
\raisebox{-5pt}{\young{&&&\cr&&&&3\cr}}\right),
\left(\raisebox{-5pt}{\young{2&2\cr1&1&1&1\cr}}\ ,
\raisebox{-5pt}{\young{&&1\cr&&&&3\cr}}\ ,
\raisebox{-5pt}{\young{&&&3\cr&&&&\cr}}\right)$$

For $(\tau^{(0)}, \tau^{(1)}, \tau^{(2)}) = ((4,2), (1,1), (1))$, we have that $\left|B_{(4,2)(1,1)(1)}^{(5,4)(5,2,2)}\right| = 1$ and contains:

$$\left(\raisebox{-5pt}{\young{2&2\cr1&1&1&1\cr}}\ ,
\raisebox{-5pt}{\young{&&3\cr&&&&1\cr}}\ ,
\raisebox{-5pt}{\young{&&&3\cr&&&&\cr}}\right)$$
\end{example}

%$$\young{1&1&23&23\cr&&&&4\cr}\hskip .2in
%\young{1&1&23&4\cr&&&&23\cr}\hskip .2in
%\young{1&1&23&24\cr&&&&3\cr}\hskip .2in
%\young{1&1&3&23\cr&&&&24\cr}\hskip .2in
%\young{1&1&3&24\cr&&&&23\cr}\hskip .2in
%\young{1&13&23&24\cr&&&&\cr}$$

We will use the two versions of the Littlewood-Richardson rule
in Propositions \ref{prop:LR1} and \ref{prop:LR2} to show the following result.

\begin{prop} \label{prop:baselineCI}
Let $\la$, $\ga$, and $\tau^{(i)}$, $0 \leq i \leq \ell$,  be partitions satisfying 
$|\la| = |\ga| = \sum\limits_{i=0}^\ell |\tau^{(i)}|$, then 
$$\left|B_{\tau^{(0)} \tau^{(1)}\ldots\tau^{(\ell)}}^{\ga,\la}\right| =
c^\ga_{\tau^{(0)} \tau^{(1)}\cdots\tau^{(\ell)}} c^\la_{\tau^{(0)}\tau^{(1)}\cdots\tau^{(\ell)}}~.$$
\end{prop}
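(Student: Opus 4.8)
<br>

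The plan is to establish a bijection between the set $B_{\tau^{(0)} \tau^{(1)}\ldots\tau^{(\ell)}}^{\ga,\la}$ and pairs of sequences of tableaux, one sequence witnessing $c^\ga_{\tau^{(0)} \tau^{(1)}\cdots\tau^{(\ell)}}$ and the other witnessing $c^\la_{\tau^{(0)}\tau^{(1)}\cdots\tau^{(\ell)}}$ via the two Littlewood--Richardson rules stated in Propositions \ref{prop:LR1} and \ref{prop:LR2}. The key observation is that an element $(T_0, T_1, \ldots, T_\ell)$ of $B_{\tau^{(0)} \tau^{(1)}\ldots\tau^{(\ell)}}^{\ga,\la}$ carries two independent pieces of combinatorial data that have been deliberately superimposed: the \emph{shapes} of the skew tableaux record a chain of partitions $\emptyset = \rho^{(-1)} \subseteq \tau^{(0)} = \rho^{(0)} \subseteq \rho^{(1)} \subseteq \cdots \subseteq \rho^{(\ell)} = \ga$ together with the jeu de Taquin rectification classes, while the \emph{entries} form a lattice filling of content $\la$.

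First I would unpack the shape data. Since $T_i$ is a skew column-strict tableau of shape $\rho^{(i)}/\rho^{(i-1)}$ that is jeu de Taquin equivalent to a tableau of shape $\tau^{(i)}$, Proposition \ref{prop:LR2} says that for each $i$ the number of such $T_i$ with fixed inner and outer shapes is exactly $c^{\rho^{(i)}}_{\rho^{(i-1)} \tau^{(i)}}$. Multiplying these over $i$ and summing over the intermediate shapes $\rho^{(1)}, \ldots, \rho^{(\ell-1)}$ gives, by the standard iterated Littlewood--Richardson expansion, precisely $c^\ga_{\tau^{(0)} \tau^{(1)} \cdots \tau^{(\ell)}}$ — the number of ways to build $\ga$ from $\tau^{(0)}, \ldots, \tau^{(\ell)}$ by successive LR additions. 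So the sequence of shapes of $(T_0, \ldots, T_\ell)$ is enumerated by $c^\ga_{\tau^{(0)} \tau^{(1)} \cdots \tau^{(\ell)}}$, independently of any entries.

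Next I would analyze the entries. Given any fixed valid sequence of shapes (i.e. the inner/outer shapes of the $T_i$ already chosen), the remaining freedom is exactly to choose the entries of $T_1, T_2, \ldots, T_\ell$ (the entries of $T_0$ are forced, being super-standard) subject to: column-strictness within each $T_i$, and the requirement that the concatenated word $read(T_0) read(T_1) \cdots read(T_\ell)$ be lattice of content $\la$. But the disjoint union of the skew shapes $\tau^{(0)}, \rho^{(1)}/\rho^{(0)}, \ldots, \rho^{(\ell)}/\rho^{(\ell-1)}$, read in this order, is precisely the setup of Proposition \ref{prop:LR1} applied iteratively (with the roles of the $\tau^{(i)}$'s as the shapes and $\la$ as the content): the number of column-strict fillings of these successive shapes whose combined reading word is lattice of content $\la$ is $c^\la_{\tau^{(0)} \tau^{(1)} \cdots \tau^{(\ell)}}$. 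Crucially this count does \emph{not} depend on which valid sequence of outer shapes $\rho^{(i)}$ was chosen, because Proposition \ref{prop:LR1} counts fillings of abstract shapes $\tau^{(i)}$ (the rectification targets), and jeu de Taquin preserves both column-strictness structure and the lattice property of reading words — this is the content of the equivalence between the two LR rules. Hence for each of the $c^\ga_{\tau^{(0)} \cdots \tau^{(\ell)}}$ shape-sequences there are exactly $c^\la_{\tau^{(0)} \cdots \tau^{(\ell)}}$ ways to fill in the entries, and multiplying gives the claim.

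The main obstacle I anticipate is making the last independence argument fully rigorous: one must be careful that "choosing the shape data" and "choosing the entry data" genuinely factor, i.e. that the lattice-and-content condition on $read(T_0)\cdots read(T_\ell)$ can be decoupled from the jeu de Taquin condition on the individual shapes. The clean way around this is not to work with skew tableaux directly but to pass through the jeu de Taquin rectifications: replace each $T_i$ by its rectification $T_i^{\mathrm{rect}}$ of straight shape $\tau^{(i)}$, and invoke the fact (implicit in the compatibility of Propositions \ref{prop:LR1} and \ref{prop:LR2}) that rectification is a content- and reading-word-lattice-preserving bijection from skew column-strict tableaux of shape $\rho^{(i)}/\rho^{(i-1)}$ rectifying to shape $\tau^{(i)}$ onto column-strict tableaux of shape $\tau^{(i)}$. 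Under this reparametrization, an element of $B$ becomes exactly a pair: (a) a choice of chain $\rho^{(\bullet)}$ and skew LR-witnesses for $c^\ga_{\tau^{(0)}\cdots\tau^{(\ell)}}$, and (b) a tuple of straight-shape tableaux $(T_0^{\mathrm{rect}}, \ldots, T_\ell^{\mathrm{rect}})$ with combined lattice reading word of content $\la$, i.e. an LR-witness for $c^\la_{\tau^{(0)}\cdots\tau^{(\ell)}}$ in the sense of Proposition \ref{prop:LR1}. Once the bijection is phrased this way the product formula is immediate.
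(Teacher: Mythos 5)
Your route --- unrolling the paper's induction into a single factorization where the chain of outer shapes is enumerated by $c^\ga_{\tau^{(0)}\cdots\tau^{(\ell)}}$ (via iterated Proposition \ref{prop:LR2}) and the tuple of rectifications is enumerated by $c^\la_{\tau^{(0)}\cdots\tau^{(\ell)}}$ (via Proposition \ref{prop:LR1}) --- captures the correct decomposition, and you correctly isolate the nontrivial fact that the lattice condition on $read(T_0)\cdots read(T_\ell)$ can be tested on the rectifications instead; this is exactly the fact the paper invokes (without detailed justification) when it asserts that $1^{\nu_1}\cdots\ell^{\nu_\ell}read(T_\ell)$ is lattice if and only if $1^{\nu_1}\cdots\ell^{\nu_\ell}read(S_\ell)$ is.

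The gap is in the claim that underwrites your ``clean reparametrization.'' Rectification is \emph{not} a bijection from $\{$skew column-strict tableaux of shape $\rho^{(i)}/\rho^{(i-1)}$ rectifying to shape $\tau^{(i)}\}$ onto $\{$column-strict tableaux of shape $\tau^{(i)}\}$; it is a surjection, and Proposition \ref{prop:LR2} says precisely that each fiber has cardinality $c^{\rho^{(i)}}_{\rho^{(i-1)}\tau^{(i)}}$, which is typically $>1$. So replacing each $T_i$ by $T_i^{\mathrm{rect}}$ loses information, and the asserted bijection from $B$ onto (chain with LR-witnesses)$\,\times\,$(tuple of rectifications) does not follow from Propositions \ref{prop:LR1} and \ref{prop:LR2} as stated --- making it a genuine bijection would require the jeu de taquin recording/tableau-switching machinery, which is not among the cited tools. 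The correct and cheap repair is to not ask for a bijection at all: map $(T_0,\ldots,T_\ell)$ to its chain $\rho^{(\bullet)}$ and rectifications $(S_i)$, note by Proposition \ref{prop:LR2} that the fiber over each image point has size $\prod_i c^{\rho^{(i)}}_{\rho^{(i-1)}\tau^{(i)}}$ (independent of the $S_i$), and observe that the image is the product of the set of chains with the set of lattice tuples $(S_i)$; the double sum then factors into $c^\ga\cdot c^\la$. That fiber count, done one index $i$ at a time, is exactly the paper's induction, so your argument as intended is a compressed version of the paper's rather than a genuinely different one, but as written it overstates what rectification gives you.
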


\begin{proof}
The enumeration will follow by induction on $\ell$.
First, we note that
$$c_{\tau^{(0)} \tau^{(1)} \cdots \tau^{(\ell)}}^{\ga}
= \sum_{\tau^{(0)} \subseteq \ga^{(1)} \subseteq \cdots \subseteq \ga^{(\ell-1)} \subseteq \ga} c_{\tau^{(0)}\tau^{(1)}}^{\ga^{(1)}} 
c_{\ga^{(1)}\tau^{(2)}}^{\ga^{(2)}}\cdots 
c_{\ga^{(\ell-1)}\tau^{(\ell)}}^{\ga^{(\ell)}}$$
where the sum is over chains of partitions $\ga^{(i)}$ for $1 \leq i \leq \ell-1$
such that $|\tau^{(i)}| = |\ga^{(i)}| - |\ga^{(i-1)}|$ with 
the convention that $\ga^{(0)} = \tau^{(0)}$
and $\ga^{(\ell)} = \ga$.
This follows by the associativity of products of Schur functions since both
sides of this equation are the coefficient of $s_{\ga}$ in 
$s_{\tau^{(0)}} s_{\tau^{(1)}} \cdots s_{\tau^{(\ell)}}$.  For the induction argument
we only need that
$$c_{\tau^{(0)} \tau^{(1)} \cdots \tau^{(\ell)}}^{\ga}
= \sum_{\zeta \subseteq \ga} 
c_{\tau^{(0)} \tau^{(1)} \cdots \tau^{(\ell-1)}}^{\zeta} 
c_{\zeta\tau^{(\ell)}}^{\ga}~.$$

Now for $\ell=0$ we are taking as a base case
the definition that $B_{\tau}^{\ga,\la}$ is empty unless
$\la = \ga = \tau$ and this case, $B_\tau^{\tau\tau} = \{(T)\}$
where $T$ is the superstandard tableau of shape $\tau$.  The base
case of the coefficient is the coefficient $c_\tau^\tau=1$.

%Consider the case when $\ell=1$.  The number of pairs $(T_0, T_1)$ such that
%the content of $T_0$ and $T_1$ together is $\la$ and the shape of $T_1$ 
%when brought to straight shape is $\tau^{(1)}$ and
%$read(T_0) read(T_1)$ is lattice is equal to $c_{\tau^{(0)}\tau^{(1)}}^\la$
%by Proposition \ref{prop:LR1}. Let $S_1$ a fixed straight shape tableau, 
%the number of $T_1$ which is skew shape
%$\ga/\tau^{(0)}$ and that is jeu de Taquin equivalent to $S_1$
%is equal to $c^\ga_{\tau^{(0)}\tau^{(1)}}$ by Proposition \ref{prop:LR2}.
%So the number of sequences $(T_0, T_1)$ in
%$B_{\tau^{(0)}\tau^{(1)}}^{\ga,\la}$ is equal to 
%$c_{\tau^{(0)}\tau^{(1)}}^\ga c_{\tau^{(0)}\tau^{(1)}}^\la$.
%%%%%%%

Now assume that the proposition is true for tableaux sequences of length $\ell-1$.  We observe that the number of sequences in $B_{\tau^{(0)} \tau^{(1)}\ldots\tau^{(\ell)}}^{\ga,\la}$ (see Definition \ref{def:Bset}) is the same as the number of pairs 
%and we consider how many sequences there are of the form of equation \eqref{eq:seqform1}
%that are in $A_{\tau^{(0)} \tau^{(1)}\ldots\tau^{(\ell)}}^{\ga,\la}$.  Clearly there are the same as the number of pairs
$$((T_1, T_2, \ldots, T_{\ell-1}), T_{\ell})$$
where $(T_1, T_2, \ldots, T_{\ell-1}) \in B_{\tau^{(0)} \tau^{(1)}\ldots\tau^{(\ell-1)}}^{\zeta,\nu}$
for some partitions $\nu \subseteq \la$ and $\zeta \subseteq \ga$ and the last
tableau $T_{\ell}$ has the following properties 
\begin{enumerate}
\item \label{B:one} $\tau^{(\ell)}$ is the straight shape of the tableau obtained by applying jeu de Taquin to $T_{\ell}$. 
%when it is brought to straight shape by jeu de Taquin
\item \label{B:two} $1^{\nu_1}2^{\nu_2}\cdots\ell(\nu)^{\nu_{\ell(\nu)}} 
read(T_{\ell})$ is a lattice word of content $\la$.
\item \label{B:three} $T_{\ell}$ is of shape $\ga/\zeta$.
\end{enumerate}

Let $S_\ell$ be the straight shape tableau obtained by applying jeu de Taquin to $T_\ell$, then $1^{\nu_1}2^{\nu_2}\cdots\ell(\nu)^{\nu_{\ell(\nu)}} 
read(T_{\ell})$ is lattice if and only if $1^{\nu_1}2^{\nu_2}\cdots\ell(\nu)^{\nu_{\ell(\nu)}} 
read(S_{\ell})$ is lattice.  The number of $T_\ell$ satisfying
\eqref{B:one}, \eqref{B:two} and \eqref{B:three}
is equal to the number of pairs $(T_\ell, S_\ell)$
satisfying:
\begin{enumerate}
\item[($1'$)] \label{Bp:one} $S_\ell$ is equal to the straight shape tableau obtained by applying jeu de Taquin to $T_\ell$. 
%$T_{\ell}$ is equal to $S_\ell$ when it is brought to straight shape by jeu de Taquin
\item[($2'$)] \label{Bp:two} $S_\ell$ is of shape $\tau^{(\ell)}$.
\item[($3'$)] \label{Bp:three} $1^{\nu_1}2^{\nu_2}\cdots\ell(\nu)^{\nu_{\ell(\nu)}}. 
read(S_{\ell})$ is a lattice word of content $\la$.
\item[($4'$)] \label{Bp:four} $T_{\ell}$ is of shape $\ga/\zeta$.
\end{enumerate}
By Proposition \ref{prop:LR1}
there are $c_{\tau^{(\ell)}\nu}^\lambda$ possible $S_\ell$ satisfying $(2')$ and $(3')$.
If we fix a straight shape $S_\ell$, then Proposition \ref{prop:LR2}
says that the number of $T_\ell$ satisfying $(1')$ and $(4')$
is equal to $c_{\tau^{(\ell)}\zeta}^{\ga}$.  Therefore we have
$c_{\tau^{(\ell)}\nu}^\la c_{\tau^{(\ell)}\zeta}^{\ga}$ pairs $(T_\ell, S_\ell)$.

By the inductive hypothesis $\left|B_{\tau^{(0)} \tau^{(1)}\ldots\tau^{(\ell-1)}}^{\zeta,\nu}\right|
= c_{\tau^{(0)} \tau^{(1)}\ldots\tau^{(\ell-1)}}^{\zeta}
c_{\tau^{(0)} \tau^{(1)}\ldots\tau^{(\ell-1)}}^{\nu}$, then
by the addition and multiplication principles we have
\begin{align*}
\left| B_{\tau^{(0)} \tau^{(1)}\ldots\tau^{(\ell)}}^{\ga,\la} \right| &=
\sum_{\zeta\subseteq\ga}\sum_{\nu \subseteq \la} c_{\tau^{(\ell)}\zeta}^{\ga} c_{\tau^{(\ell)}\nu}^\la
c_{\tau^{(0)} \tau^{(1)}\ldots\tau^{(\ell-1)}}^{\zeta}
c_{\tau^{(0)} \tau^{(1)}\ldots\tau^{(\ell-1)}}^{\nu}\\
&= c_{\tau^{(0)} \tau^{(1)}\ldots\tau^{(\ell)}}^{\ga}
c_{\tau^{(0)} \tau^{(1)}\ldots\tau^{(\ell)}}^{\la}~.
\end{align*}
The proposition follows by induction.
\end{proof}

We are now ready to prove the combinatorial interpretation of Theorem \ref{thrm:htmustlam} for $\hht_\mu\st_\la$.

\begin{proof} (of Theorem \ref{thrm:htmustlam}) Fix a positive integer $n$ 
which is sufficiently large.
The combinatorial interpretation for 
$B_{\tau^{(0)} \tau^{(1)} \cdots \tau^{(\ell)}}^{(n-|\ga|,\ga)(n-|\la|,\la)}$
in this proof will depend on $n$, but the cardinality of the set is
independent of $n$ as long as the number of $1$'s in $T_0$ is greater than $\ga_1$
for all sequences $T^\ast \in B_{\tau^{(0)} \tau^{(1)} \cdots \tau^{(\ell)}}^{(n-|\ga|,\ga)(n-|\la|,\la)}$.

In order to show that this combinatorial interpretation is
correct we will show that the number of tableaux in the statement of the
theorem equal to
Equation \eqref{eq:stcoeffinhtstprod}.
That is, we will show that the number of tableaux described in
Theorem \ref{thrm:htmustlam} is equal to
$$\left< \hht_\mu \st_\la,
\st_\ga \right>_@=\sum_{\tau^{(0)},\tau^{(1)},\ldots,\tau^{(\ell)}}
c_{\tau^{(0)} \tau^{(1)} \cdots \tau^{(\ell)}}^{(n-|\ga|,\ga)}
c_{\tau^{(0)} \tau^{(1)} \cdots \tau^{(\ell)}}^{(n-|\la|,\la)}
=
\sum_{\tau^{(0)},\tau^{(1)},\ldots,\tau^{(\ell)}}
\left|B_{\tau^{(0)} \tau^{(1)} \cdots \tau^{(\ell)}}^{(n-|\ga|,\ga)(n-|\la|,\la)}\right|$$
by Proposition \ref{prop:baselineCI}.
That is, we will show that there is a bijection between the set of tableaux described in the theorem and the set
\begin{equation}
B^{\ga,\la}_\alpha := 
\biguplus_{\tau^{(0)},\tau^{(1)},\ldots,\tau^{(\ell)}}
B_{\tau^{(0)} \tau^{(1)} \cdots \tau^{(\ell)}}^{(n-|\ga|,\ga)(n-|\la|,\la)}~,
\end{equation}
where the union is over all partitions $\tau^{(0)} \vdash n-|\mu|$
and $\tau^{(i)} \vdash \alpha_i$ and the
set $B_{\tau^{(0)} \tau^{(1)} \cdots \tau^{(\ell)}}^{(n-|\ga|,\ga)(n-|\la|,\la)}$
is defined in Definition \ref{def:Bset}.

As an example to follow along, we note 
that the tableaux in Example
\ref{ex:ht21st22} are in bijection
with the elements of $B^{(5,4),(5,2,2)}_{(2,1)}$ in Example \ref{ex:Btabseq}. The elements in both examples are given in order so that they correspond when we apply the bijection described below.  %in the same order that they are presented in the examples.
\vskip .2in

Let $T \in \MCT_\ga{(\la,\alpha)}$ have set entries with
at most one barred and at most one non-barred entry and such that
$T$ is a lattice tableau.
The tableau $T$ is
a skew tableau of shape $(r,\ga)/(\ga_1)$ and we think of the
cells in the inner shape $(\ga_1)$ as filled with empty sets. 
First pad $T$ with $n - |T|$ additional empty cells in the first row so that it is of
shape $(n-|\ga|,\ga)$ to make a tableau $T'$.  Next, let $T'|_-$ be the skew tableau consisting of the cells of $T'$ with no un-barred entries (including the empty cells) and $T'{\big|}_{i}$ for $1 \leq i \leq \ell$
be the skew tableau consisting of the cells of the form $\{i\}$ or $\{\o{j},i\}$. In $T'|_-$ replace empty cells with 1 and $\bar{j}$ with $j+1$ and in $T'{\big|}_{i}$ replace $\{i\}$ with a $1$ and $\{\o{j},i\}$ with a $j+1$. Then set 
$$T^\ast = (T'{\big|}_{-}, T'{\big|}_{1}, \ldots,
T'{\big|}_{\ell}),$$
that is $T_0 = T'{\big|}_{-}$ and $T_i = T'{\big|}_{i}$ for $1 \leq i \leq \ell$.

To ensure this correspondence is clear, we have for example (with $n=12$), (note that we padded $T_0$ with $n-|T|=2$ extra cells)
$$
\squaresize = 14pt
\raisebox{-12pt}{\young{\o2&\o32\cr\o1&\o11&\o22\cr&&&\o11&\o32\cr}} \
\longrightarrow
{\squaresize = 11pt
\ T_0 = \raisebox{-12pt}{\young{3\cr2\cr1&1&1&1&1\cr}\ ,}\  T_1 = \raisebox{-12pt}{\young{\cr&2\cr&&&&&2\cr}\ ,} \  T_2
= \raisebox{-12pt}{\young{&4\cr&&3\cr&&&&&&4\cr}}}~.
$$

To establish that $read(T_0) read(T_1) \cdots read(T_\ell)$
is a lattice word we note that if $w = read(T|_-)$ with $\o{j}$ replaced by $j+1$
then since no cells labeled with $\{\o{j}\}$ are allowed in the
first row of $T$, then $read(T_0) = 1^{n-|T|+\ga_1} w$. 
The fact that $read(T|_-) read(T|_1)\cdots read(T|_\ell)$
is a lattice word
implies that $read(T_0) read(T_1) \cdots read(T_\ell)$ is also lattice word
since the number of $2$'s in $read(T_0) read(T_1) \cdots read(T_\ell)$
will be less than the $n-|T|+\ga_1$ $1$'s at the beginning of the word.

The sequence $T^\ast$
is an element in $B^{\ga,\la}_\alpha$ and we can determine the sequence
of partitions $\tau^{(i)}$ from $T^\ast$ by setting
$\tau^{(i)}$ to be the shape of $T'{\big|}_{i}$ once it is
brought to straight shape using jeu de Taquin.
This gives us a sequence of partitions such that
$T^\ast \in B_{\tau^{(0)} \tau^{(1)} \cdots \tau^{(\ell)}}^{(n-|\ga|,\ga)(n-|\la|,\la)}$
by Definition \ref{def:Bset}.

Moreover, every $(T_0, T_1, \ldots, T_\ell) \in B^{\ga,\la}_\alpha$
corresponds to a set tableau $T$ of shape
$(n-|\ga|,\ga)$ by reversing the bijection
and replacing $1$ with a blank cell in $T_0$ and a $j+1$ with
a $\{\o{j}\}$, and in $T_i$ replacing a $1$ with a cell labeled
with an $\{i\}$ and a $j+1$ with $\{\o{j},i\}$
and overlaying the resulting
skew tableaux of shape $(n-|\ga|,\ga)$.
The result may have too many blank cells, but they
can be deleted until there are precisely
$\gamma_1$.

Notice that $read(T|_-) read(T|_1)\cdots read(T|_\ell)$ is equal to the word formed by starting with
$read(T_0) read(T_1) \cdots read(T_\ell)$ and then deleting the $1$'s and replacing
$j+1$ with $\o{j}$.  Thus, if $read(T_0) read(T_1) \cdots read(T_\ell)$ is a lattice
word then
$read(T|_-) read(T|_1)\cdots read(T|_\ell)$ will also be a lattice word.
\end{proof}

%\begin{example}
%The coefficient of
%$\st_{3}$ in $\hht_{11} \st_{22}$ is equal to $2$, then this coefficient
%is represented
%by the the following tableaux.
%\squaresize=14pt
%$$\young{\o1&\o1&\o21\cr&&&\o22\cr}\hskip .2in
%\young{\o1&\o1&\o22\cr&&&\o21\cr}$$
%\end{example}

%\begin{example}
%The coefficient of
%$\st_{4}$ in $\hht_{11} \st_{3}$ is equal to $4$, then this coefficient
%is represented
%by the the following tableaux.
%\squaresize=14pt
%$$\young{\o1&\o1&\o11&2\cr&&&\cr}\hskip .2in
%\young{\o1&\o1&1&\o12\cr&&&\cr}\hskip .2in
%\young{\o1&\o1&\o1&1\cr&&&&2\cr}\hskip .2in
%\young{\o1&\o1&\o1&2\cr&&&&1\cr}$$
%\end{example}

%\begin{example}
%The coefficient of
%$\st_{1}$ in $\hht_{11} \st_{11}$ is equal to $4$, then this coefficient
%is represented
%by the the following tableaux.
%\squaresize=14pt
%$$\young{\o11\cr&\o22\cr}\hskip .2in
%\young{\o22\cr&\o11\cr}\hskip .2in
%\young{\o1\cr&\o21&2\cr}\hskip .2in
%\young{\o1\cr&1&\o22\cr}$$
%\end{example}

To prove the other combinatorial interpretations presented
in this paper, we need Corollary \ref{cor:htmpistla}, (stated
below) where we establish a combinatorial interpretation for
the product $\hht_{\mt(\pi)} \st_\la$ where $\pi$ is a
multiset partition.

Fix $\pi$, a multiset partition of 
$\dcl 1^{\mu_1}, 2^{\mu_2}, \cdots, \ell^{\mu_\ell}\dcr$, and
let $S_1 < S_2 < \ldots < S_d$
be the (distinct) parts of the multiset
partition ordered in reverse lex
order and let $\alpha(\pi) = (\al_1, \al_2,\ldots,\al_d)$
be the composition of $\ell(\pi)$
of length $d$ such that $\alpha_i$ is the multiplicity of
$S_i$ in $\pi$.  The symbol
$\alpha(\pi)$ is defined so that $sort(\alpha(\pi)) = \mt(\pi)$.

For partitions $\la$ and $\ga$, we define $\MCT'_\ga(\la,\pi)$ to be the set of tableaux $T \in \MCT_\ga(\la, \mu)$ such that
the labels of the cells satisfy the following two conditions
\begin{itemize}
\item the multiset of multisets of unbarred entries (ignoring the barred entries and those that don't have any unbarred entries)
is equal to the multiset partition $\pi$;
\item $T$ is a lattice tableau.
\end{itemize}

\begin{example}
In the following tableau, if we ignore barred entries there are three boxes with unbarred entries.
The distinct multisets of unbarred entries are $S_1 =\dcl 1 \dcr$ and $S_2 =\dcl 1,2 \dcr$ (occurs twice).
\squaresize=16pt
$$\young{\o1&\o1&\o21&\o212\cr&&&&12\cr}$$
Since $read(T|_{-}) read(T|_{S_1}) read(T|_{S_2}) = \o1\o1.\o2.\o2$ is lattice, then this tableau is an element
of $\MCT'_{(4)}((2,2),\dcl\dcl1\dcr,\dcl1,2\dcr,\dcl1,2\dcr\dcr)$.
%%%% moved to the definition of 'lattice tableau'
%Note that (for instance) the tableau
%\squaresize=14pt
%$$\young{\o1&\o21&\o21&\o12\cr&&&\cr}$$
%is column strict and an element of $\MCT_{(4)}((2,2),(2,1))$
%but has reading word of the barred entries equal to
%$read(T|_-)read(T|_1)read(T|_2)= \o1.\o2\o2.\o1$
%which is not lattice and hence it is not an element of
%$\MCT'_{(4)}((2,2),\dcl\dcl1\dcr,\dcl1\dcr,\dcl2\dcr\dcr)$.
\end{example}

\begin{cor} \label{cor:htmpistla} For $\la$ a partition and
$\pi \mvdash \dcl 1^{\mu_1}, 2^{\mu_2}, \cdots, \ell^{\mu_\ell}\dcr$,
\begin{equation}
\hht_{\mt(\pi)} \st_\la = \sum_{\ga}
\sum_{T \in \MCT'_\ga(\la, \pi)} \st_{\ga}
\end{equation}
where the sum is over all partitions $\ga$ such that  $|\ga|\leq \ell(\pi) + |\la|$.
\end{cor}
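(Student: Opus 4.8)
The plan is to derive Corollary~\ref{cor:htmpistla} from Theorem~\ref{thrm:htmustlam} by refining the bookkeeping of the latter according to which cells of the tableaux carry which multisets of unbarred entries. Recall that Theorem~\ref{thrm:htmustlam} computes the coefficient of $\st_\ga$ in $\hht_\mu\st_\la$ as the number of lattice tableaux $T\in\MCT_\ga(\la,\alpha)$ whose entries are sets with at most one barred and at most one unbarred entry, where $\mu = sort(\alpha)$. The key observation is that the combinatorial rule there never actually uses that the unbarred entries are distinct across distinct cells --- it only records, for each $i\in\{1,\ldots,\ell\}$, how many cells of each type (blank, $\{\o j\}$, $\{i\}$, $\{\o j,i\}$) appear; the labels $1,\ldots,\ell$ themselves are only a device to index the content composition $\alpha$. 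For the product $\hht_{\mt(\pi)}\st_\la$ we want instead to index cells by the parts $S_1<\cdots<S_d$ of the multiset partition $\pi$, with $\alpha(\pi)$ playing the role of $\alpha$.

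First I would apply Theorem~\ref{thrm:htmustlam} with the composition $\alpha = \alpha(\pi)$, so that $sort(\alpha(\pi)) = \mt(\pi)$ and the coefficient of $\st_\ga$ in $\hht_{\mt(\pi)}\st_\la$ equals the number of lattice tableaux $T\in\MCT_\ga(\la,\alpha(\pi))$ with set entries, at most one barred and at most one unbarred entry per cell. The content condition says that the unbarred label $i$ (for $1\le i\le d$) appears exactly $\alpha_i$ times, i.e.\ in exactly $\alpha_i$ cells. Now I would define a map sending such a $T$ to the tableau $T'$ obtained by replacing, in every cell, the unbarred label $i$ by the entire multiset $S_i$ (leaving the barred entry, if any, untouched). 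The lattice condition in Theorem~\ref{thrm:htmustlam} is phrased using $read(T|_{S_1'})\cdots read(T|_{S_d'})$ where $S_1'<\cdots<S_d'$ are the \emph{multisets} of unbarred entries actually occurring; under our substitution these are precisely (a sub-multiset of) $\{S_1,\ldots,S_d\}$ in the same reverse-lex order, since $i<j$ forces $S_i<S_j$ and the reading word of the cells labeled $i$ in $T$ matches the reading word of cells labeled $S_i$ in $T'$. Hence $T$ is lattice in the sense of $\MCT_\ga(\la,\alpha(\pi))$ if and only if $T'$ is a lattice tableau with multiset of unbarred-multisets equal to $\pi$, i.e.\ $T'\in\MCT'_\ga(\la,\pi)$ as in the definition preceding the corollary. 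One must also check that $T'$ is column strict in the reverse-lex order on multisets: this follows because column-strictness of $T$ is with respect to the order $\o1<\o2<\cdots<1<2<\cdots$, and replacing an unbarred singleton $\{i\}$ by $S_i$ with $S_i<S_j$ whenever $i<j$, together with the fact that a multiset $\{\o j, i\}$ compares the same way as $\{\o j\}\cup S_i$ under reverse lex (the barred entry is dominated by any nonempty unbarred part in the tail comparison), preserves all the weak/strict inequalities along rows and down columns.

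Next I would exhibit the inverse map: given $T'\in\MCT'_\ga(\la,\pi)$, in each cell replace the unbarred multiset part by its index $i$ among $S_1<\cdots<S_d$ (cells with no unbarred entry are unchanged). This is well defined because the multiset-of-multisets of unbarred parts of $T'$ is exactly $\pi$, so every nonempty unbarred part equals some $S_i$, and the multiplicity of $S_i$ is $\alpha_i$, which is exactly the content requirement for the label $i$ in $\MCT_\ga(\la,\alpha(\pi))$. The two maps are mutually inverse by construction, and the discussion above shows they preserve shape, column-strictness, the one-barred/one-unbarred-per-cell condition, and the lattice property. Therefore
$$\#\{\,\ga\text{-tableaux counted by Theorem~\ref{thrm:htmustlam} for }\hht_{\mt(\pi)}\st_\la\,\} = |\MCT'_\ga(\la,\pi)|,$$
and summing $\st_\ga$ over all $\ga$ gives the stated identity. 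Finally I would note the range of the sum: any $T'\in\MCT'_\ga(\la,\pi)$ has $\ga$ obtained from $(\ga_1)$ by adding $|\ga|$ cells in rows $\ge 2$ together with the first-row cells, and the total number of cells carrying an unbarred entry is $\ell(\pi)$ while the barred entries are reindexed from $\la$; a short count (identical to the one implicit in Theorem~\ref{thrm:htmustlam}, where $\ga$ ranges over partitions with $|\ga|\le|\la|+|\mu|$ and $|\mu|=\ell(\pi)$) shows $|\ga|\le \ell(\pi)+|\la|$, giving the index set in the corollary.

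The step I expect to be the main obstacle --- or at least the one requiring the most care --- is verifying that the reverse-lex order on the multiset labels $S_1,\ldots,S_d$ is \emph{compatible} with the integer order $1<\cdots<d$ used in Theorem~\ref{thrm:htmustlam} in \emph{both} the column-strictness and the lattice-reading senses simultaneously, especially in the mixed cells $\{\o j, i\}$ where a barred entry coexists with an unbarred one. One has to confirm that the comparison $\{\o j, i\}$ versus $\{\o k, i'\}$ (or versus a blank-plus-bar cell $\{\o k\}$, which is forbidden in row~$1$ but allowed elsewhere) under reverse lex on multisets reduces, after the substitution $i\mapsto S_i$, to exactly the comparison that column-strictness of $T$ already guaranteed --- this is where the convention $\o a<\o b<\cdots<1<2<\cdots$ and the "at most one barred entry" hypothesis are essential, since they make the unbarred tail dominate the comparison. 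Once this compatibility is nailed down, the bijection is essentially a relabeling and the rest is routine.
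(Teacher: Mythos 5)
Your proposal is correct and follows exactly the paper's own proof: apply Theorem~\ref{thrm:htmustlam} with the composition $\alpha=\alpha(\pi)$, then establish the relabeling bijection $i\mapsto S_i$ (with inverse $S_i\mapsto i$) between $\MCT_\ga(\la,\alpha(\pi))$ and $\MCT'_\ga(\la,\pi)$, noting that reading words are preserved. The paper simply asserts the bijection and the preservation of the reading word, whereas you spell out the compatibility of the reverse-lex order on multisets with the integer labeling for both column-strictness and latticeness --- a worthwhile check, but not a different route.
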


\begin{proof} By Theorem \ref{thrm:htmustlam}, to  demonstrate this result, we need to establish a bijection for each partition $\ga$ between
the set of $T \in \MCT_\ga(\la,\alpha(\pi))$
and $T' \in \MCT'_\ga(\la, \pi)$ such that
$$read(T'|_-) read(T'|_{S_1}) \cdots read(T'|_{S_d})=
read(T|_-)read(T|_1)\cdots read(T|_d)~.$$

We take the $S_1 < S_2 < \ldots < S_d$
be the (distinct) parts of the multiset
partition ordered in reverse lex order. The bijection between these sets is to replace the unbarred label $i$ in
$T$ with the set $S_i$ for each $1\leq i \leq d$
to obtain $T'$.  The inverse bijection is to replace the occurrences of the set $S_i$ in $T'$ with the label $i$ to obtain the corresponding 
$T \in \MCT_{\overline{sh(T')}}(\la,\al(\pi))$.

Hence we have by Theorem \ref{thrm:htmustlam} that
\begin{equation}
\hht_{\mt(\pi)} \st_\la = \sum_{\ga : |\ga|\leq|\mt(\pi)| + |\la|}
\sum_{T \in \MCT_\ga(\la, \alpha(\pi))} \st_\ga =
\sum_{\ga : |\ga|\leq|\mt(\pi)| + |\la|}
\sum_{T \in \MCT'_\ga(\la, \pi)} \st_\ga~.
\end{equation}
\end{proof}

\begin{example} We provide an example to clarify the notation in the bijection of the proof of Corollary \ref{cor:htmpistla}.
Let $\mu = (5,1)$ and then $\pi = \dcl \dcl1\dcr,
\dcl1,1\dcr, \dcl1,1\dcr, \dcl2\dcr\dcr$ is a multiset
partition of $\dcl1^5,2\dcr$ and
\begin{equation*}
\squaresize=14pt
T = \raisebox{-16pt}{\young{\o2&\o12&\o13\cr
\o1&\o1&1\cr&&&\o22\cr}}
\end{equation*}
is an element of $\MCT_{(3,3)}{((4,2),(1,2,1))}$.
Since the multisets of $\pi$ are ordered
$\dcl1\dcr<\dcl1,1\dcr<\dcl2\dcr$, the
corresponding $T'$ is equal to
\begin{equation*}
\squaresize=16pt
T' = \raisebox{-17pt}{\young{\o2&\o111&\o12\cr
\o1&\o1&1\cr&&&\o211\cr}}
\end{equation*}
and this is an element of $\MCT'_{(3,3)}((4,2), \pi) \subseteq 
\MCT_{(3,3)}{((4,2),(5,1))}$.  We also have
$read(T|_-) read(T|_1) read(T|_2) read(T|_3) =
read(T'|_-) read(T'|_{S_1})
read(T'|_{S_2}) read(T'|_{S_3})
= \o1\o1\o2..\o2\o1.\o1~,$ where $S_1 = \dcl 1\dcr$, $S_2=\dcl 1,1\dcr$ and $S_3=\dcl 2\dcr$. 
\end{example}
\end{section}

\begin{section}{Proofs of Theorems \ref{thrm:hmustlam}, \ref{thrm:htkmultiplestla} and \ref{thrm:stkmultiplestla}}
\label{sec:proofs}
\begin{subsection}{A proof of Theorem \ref{thrm:hmustlam}}
Recall that Theorem \ref{thrm:hmustlam} gives a combinatorial expression for the coefficients in the expansion of $h_\mu \st_\la$ in the irreducible
character basis, $\st_\la$.
We will rely on Corollary \ref{cor:htmpistla}
and Equation \eqref{eq:htdef}.

\begin{proof} (Theorem \ref{thrm:hmustlam})
The symmetric group $S_\ell$ acts on the multiset
$\dcl 1^{\al_1}, 2^{\al_2}, \cdots, \ell^{\al_\ell}\dcr$
as well as multiset partitions 
$\pi \mvdash \dcl 1^{\al_1}, 2^{\al_2}, \cdots, \ell^{\al_\ell}\dcr$
by permuting the values of the multiset and multiset partitions.
It is not hard to see
that $\mt(\sigma(\pi)) = \mt(\pi)$ for all $\sigma \in S_\ell$.
This implies that the obvious analogue of Equation \eqref{eq:htdef}
holds for compositions $\alpha$, that is,
$$h_{\alpha_1} h_{\alpha_2} \cdots h_{\alpha_\ell} = 
\sum_{\pi \mvdash \dcl 1^{\al_1} 2^{\al_2} \cdots \ell^{\al_\ell}\dcr}
\hht_{\mt(\pi)}~.$$

Notice that by Corollary \ref{cor:htmpistla}.
We have,
\begin{align*}
h_{\alpha_1} h_{\alpha_2} \cdots h_{\alpha_\ell} \st_\la 
&= \sum_{\pi \mvdash \dcl 1^{\al_1} 2^{\al_2} \cdots \ell^{\al_\ell}\dcr}
\hht_{\mt(\pi)} \st_\la\\
&=\sum_{\pi \mvdash \dcl 1^{\al_1} 2^{\al_2} \cdots \ell^{\al_\ell}\dcr}
\sum_\ga
\sum_{T \in \MCT'_\ga(\la,\pi)}
\st_{\ga}
\end{align*}
where the sum is over all partitions $\ga$ such that $|\ga| \leq |\la| + \ell(\pi)$.

This establishes exactly the conditions of the statement of Theorem \ref{thrm:hmustlam}
where the labels of the tableaux have labels whose entries have at most one barred entry and a multiset of non-barred entries. And if we read the barred entries using the reverse lex order
on the distinct multisets consisting of the non-barred entries the reading word is lattice.
\end{proof}
\end{subsection}

\begin{subsection}{A proof of Theorem \ref{thrm:htkmultiplestla}}\label{sec:proofhtprod}

We next establish a combinatorial interpretation for the terms
in $\hht_{\mu_1} \hht_{\mu_2} \cdots \hht_{\mu_k} \st_\lambda$.

\begin{proof} (Theorem \ref{thrm:htkmultiplestla})
Proposition 21 of \cite{OZ16} allows us to expand the product $\hht_\la\hht_\mu$ in terms of the induced character basis elements, i.e.,  $\hht$-basis.  In particular, a product of the form
$\hht_{\al_1} \hht_{\al_2} \cdots \hht_{\al_k}$ is equal to
the sum over all set partitions of a multiset $\dcl 1^{\al_1}, 2^{\al_2}, \ldots, k^{\al_k}\dcr$
(that is, multiset partitions where the sets in the partition do not
contain repeated elements), hence we have the expansion
\begin{equation} \label{eq:htproduct}
\hht_{\al_1} \hht_{\al_2} \cdots \hht_{\al_k} = \sum_{\pi} \hht_{\mt(\pi)}
\end{equation}
where the sum is over all set partitions $\pi$ of the 
multiset $\dcl 1^{\al_1}, 2^{\al_2}, \ldots, k^{\al_k}\dcr$ and $\mt(\pi)$ is the partition of the number $\ell(\pi)$ consisting of multiplicities of each multiset occurring in $\pi$.

It follows from Corollary \ref{cor:htmpistla} that
\begin{equation}
\hht_{\al_1} \hht_{\al_2} \cdots \hht_{\al_k} \st_\la =
\sum_\pi \sum_\ga \sum_{T \in \MCT'_\ga(\la, \pi)} \st_\ga
\end{equation}
where the first sum is over set partitions $\pi$ of $\dcl 1^{\al_1}, 2^{\al_2}, \ldots, k^{\al_k}\dcr$.

For a fixed $\ga$, there is one term in this sum for each set valued tableau in $\MCT_\ga(\la,\al)$
where the entries are sets and the condition that
if $S_1 < S_2 < \ldots < S_d$ are the sets
of the unbarred entries that appear
(ignoring the barred entries), then the word
$$read(T|_{-}) read(T|_{S_1}) read(T|_{S_2}) \cdots read(T|_{S_d})$$ 
is lattice because $T$ is in $\MCT'_\ga(\la,\pi)$.
\end{proof}

\end{subsection}

\begin{subsection}{A proof of Theorem \ref{thrm:stkmultiplestla}}\label{sec:proofstprod}

Building on Theorem \ref{thrm:htkmultiplestla} we can
develop a combinatorial expression for the coefficients
in the expansion of $\st_{\al_1} \st_{\al_2} \cdots \st_{\al_k} \st_\la$ in the irreducible
character basis.
We note that
$\hht_k = \st_k + \st_{k-1} + \cdots + \st_{()}$ by Equation
\eqref{eq:stdef} and so by induction we have that
$\st_{()} = \hht_{()} = 1$, $\st_1 = \hht_1 - \hht_{()}$
and
$\st_k = \hht_{k} - \hht_{k-1}$ for $k > 1$.

\begin{proof} (Theorem \ref{thrm:stkmultiplestla})
We replace $\st_k$ by $\hht_k - \hht_{k-1}$ for each term $\st_{\al_i}$ and expand.
\begin{align}\st_{\al_1} \st_{\al_2} \cdots \st_{\al_k} \st_\la&=
(\hht_{\al_1} - \hht_{\al_1-1})(\hht_{\al_2} - \hht_{\al_2-1})\cdots
(\hht_{\al_k} - \hht_{\al_k-1}) \st_\la\nonumber\\
&= \sum_{S \subseteq [k]} (-1)^{|S|}
\hht_{\al_1 - \chi(1 \in S)} \hht_{\al_2 - \chi(2 \in S)}\cdots 
\hht_{\al_k - \chi(k \in S)}\st_\la
\label{eq:htprodst}
\end{align}

By Equation \eqref{eq:htproduct}, we have that 
Equation \eqref{eq:htprodst} is equal to the following sum which we can expand
using Corollary \ref{cor:htmpistla},
\begin{equation}
= \sum_{S \subseteq [k]} \sum_\pi (-1)^{|S|} \hht_{\mt(\pi)} \st_\la
= \sum_{S \subseteq [k]} \sum_\pi \sum_\ga \sum_{T \in \MCT'_\ga(\la, \pi)} (-1)^{|S|}
\st_\ga \label{eq:fullsum}.
\end{equation}
where the second sum is over all set partitions $\pi$ of 
$\dcl 1^{\al_1 - \chi(1 \in S)},
2^{\al_2 - \chi(2 \in S)}, \ldots, k^{\al_k - \chi(k \in S)} \dcr$ and the third sum is over all partitions 
$\ga$ such that $|\ga| \leq |\la|+ \ell(\pi)$.

Now for each subset $S \subseteq [k]$, we can define an injective map
$$\phi_S : \biguplus_{\pi } \biguplus_{\ga} \MCT'_\ga(\la, \pi) \rightarrow \biguplus_{\pi'}
\biguplus_{\ga} \MCT'_\ga(\la, \pi')$$
where on the left we have the union over set partitions $\pi$ of the multiset
$$\dcl 1^{\al_1 - \chi(1 \in S)}, 2^{\al_2 - \chi(2 \in S)}, 
\ldots, k^{\al_k - \chi(k \in S)}\dcr$$
while on the right the union is over set partitions $\pi'$ of the
multiset $\dcl 1^{\al_1},2^{\al_2}, \ldots, k^{\al_k}\dcr$.
For an element $T \in \MCT'_\ga(\la, \pi)$
then $\phi_S(T)$ is equal to the tableau where a cell with a label of $\dcl i\dcr$ for each $i \in S$
is added to the first row.  Note that $\overline{sh(T)} = \overline{sh(\phi_S(T))} = \gamma$
and the reading words of the barred entries do not change.  Therefore, $\phi_S$ maps the
set $\MCT'_\ga(\la, \pi)$ to $\MCT'_\ga(\la, \pi \uplus \dcl\dcl i\dcr : i \in S\dcr)$.

We note that $\phi_S$ is an injective map and $T \in \MCT_\ga(\la,\al)$ is in the
image of $\phi_S$ if and only if there are cells with a label of $\dcl i\dcr$ where $i \in S$
in the first row.
Let $a(T)$ be the set of $i$ such that $\dcl i \dcr$ is a label in the first row of 
a $T \in \MCT_\ga(\la,\al)$.
By applying this injection, we can interchange the sum over subsets of $S \subseteq [k]$
in Equation \eqref{eq:fullsum} so that we first sum over all multiset tableaux and
for each fixed tableau, $T$, there is one term for each subset $S$ of the set
$a(T)$.  We write this as
\begin{equation}
= \sum_{\pi'} \sum_\ga \sum_{T \in \MCT'_\ga(\la, \pi')} \sum_{S \subseteq a(T)}  (-1)^{|S|} \st_\ga~.
\label{eq:switch}
\end{equation}

Since the summand $\st_\ga$ is independent of the sum over subsets
$S$, the terms with $a(T)$ not equal to the
empty set will sum to zero.  Therefore Equation \eqref{eq:switch} is equal to the expression
$$=\sum_{T} \st_{\overline{sh(T)}}$$
where this sum is over multiset tableaux $T \in \MCT_\ga(\la,\al)$
for some partition $\ga$ such that
\begin{itemize}
\item
the labels of un-barred entries (ignoring the barred entries) are sets, that is, they have no
repeated entries; 
\item $T$ is a lattice tableau;
\item the set $a(T)$ is empty, that is, there are no labels of size less than $2$ in
the first row.
\end{itemize}
These are equivalent to the conditions stated in Theorem \ref{thrm:stkmultiplestla}.
\end{proof}
\end{subsection}
\end{section}

\begin{section}{Concluding remarks and applications}
\label{sec:connections}

There is a formula for the stable Kronecker coefficients
in terms of Littlewood-Richardson coefficients and Kronecker
coefficients due to Littlewood \cite{Lit2}.
It says that
\begin{equation}
\overline{g}_{\alpha\beta}^\gamma=
\sum g_{\delta\epsilon\zeta} c_{\delta\sigma\tau}^{\alpha} c_{\epsilon\rho\tau}^{\beta} c_{\zeta\rho\sigma}^{\gamma}
\end{equation}
where the sum is over all partitions $\delta, \epsilon, \zeta,
\rho, \tau$ and $\sigma$.
This equation has been rediscovered and reproved in the 
literature (see for example \cite{BDO, Brion, ButlerKing, ScharfThibon, 
Thibon, ScharfThibonWybourne, Wagner}).

In the case when $\gamma$ is a single row or
column then this formula simplifies to an equation only involving
Littlewood-Richardson coefficients.
The combinatorial interpretation for the product $\st_k \st_\la$
in this paper reduces to Littlewood's formula in that case.

Our main goal in developing the results in this paper
is to provide a combinatorial answer to
two fundamental open problems in representation theory,
a formula for $\o{g}_{\la\mu\nu}$ and the multiplicity of
an irreducible symmetric group representation in an irreducible
polynomial $GL_n$ representation.  These correspond to the coefficients
of $\st_\nu$ in the expressions $\st_\mu \st_\la$ and $s_\mu$
(respectively).  The inequalities discussed in the introduction
may be extended to the following diagram.
\begin{center}
\begin{tabular}{ccccc}
$\st_{\mu_1} \st_{\mu_2} \cdots \st_{\mu_\ell} \st_\la$& $\leq$ & $\hht_{\mu_1} \hht_{\mu_2} \cdots \hht_{\mu_\ell} \st_\la$ & $\leq$ & $h_\mu \st_\la$\\
\begin{rotate}{-90}$\geq$\end{rotate}& &\begin{rotate}{-90}$\geq$\end{rotate}& &
\begin{rotate}{-90}$\geq$\end{rotate}\\\\
$\st_\mu \st_\la$ & $\leq$ & $\hht_\mu \st_\la$ & $\leq$ & $s_\mu \st_\la$
\end{tabular}
\end{center}
We therefore believe that if such a combinatorial formula exists
that it can be formulated in terms of the tableaux introduced in this paper.

\begin{subsection}{Multiset tableaux and paths in a Bratteli diagrams}
The coefficient of $\st_\lambda$ in the
symmetric function expression $(\hht_1)^r$ is equal to the
dimension of an irreducible representation
of the partition algebras \cite{HalRam,Martin3,Martin4} indexed by the partition $\lambda$.
Previously in the literature, these dimensions have mainly
been enumerated in terms
of paths in a Bratteli diagram and these are sometimes referred
to in the literature as vascillating tableaux
\cite{HalLew, MartinRollet}.  A consequence of our results is that it is much more natural to enumerate these in terms of
set valued tableaux since the combinatorial interpretation
more clearly reflects the generalization from standard tableaux
for dimensions of symmetric group modules to set valued
tableaux for dimensions of partition algebra modules.

Similarly, the dimensions of irreducible quasi-partition
modules \cite{DO} are equal to the coefficient of
$\st_\lambda$ in the symmetric function $(\st_1)^r$, these dimensions were first computed in \cite{ChauveGoupil} and the objects they enumerate correspond to paths in the Bratteli diagram of the quasi-partition algebra, these paths were called Kronecker tableaux.

In a recent paper, Benkart, Halverson and Harman \cite{BHH} gave
formulas for dimensions of irreducible partition algebra modules
in terms of Stirling numbers and Kostka numbers reflecting this connection with set valued tableaux.

\end{subsection}

\begin{subsection}{Bases of symmetric functions, set tableaux
and Grothendeick symmetric functions}
A consequence of the formulae that we present in this paper is that the
families of symmetric functions 
$\{\hht_{\mu_1} \hht_{\mu_2} \cdots \hht_{\mu_{\ell(\mu)}}\}_\mu$
and 
$\{\st_{\mu_1} \st_{\mu_2} \cdots \st_{\mu_{\ell(\mu)}}\}_\mu$
are two non-homogenous multiplicative bases of the ring of symmetric functions.
As special case of 
Theorem \ref{thrm:htkmultiplestla} and Theorem \ref{thrm:stkmultiplestla}
we have a combinatorial
interpretations for the transition coefficients between
these bases and the $\st$-basis.

There are a several ways of describing the $S_n$-module with character $\hht_{\mu_1} \hht_{\mu_2} \cdots \hht_{\mu_{\ell(\mu)}}$. It is a module of dimension

%There may be many ways of thinking of these symmetric functions  in terms  of symmetric group representation theory, but the symmetric function
%$\hht_{\mu_1} \hht_{\mu_2} \cdots \hht_{\mu_{\ell(\mu)}}$
%is the character of a module of dimension
$$\pchoose{n}{\mu_1} \pchoose{n}{\mu_2} \cdots \pchoose{n}{\mu_{\ell(\mu)}}$$
%\todo{do we want to be more specific  with the description of the module?
%MZ: Well there are lots of ways of describing this.
%%One description is the tensor of the
%%induced modules, but another is a subset module 
%%$v_{S_1} \otimes v_{S_2} \otimes \cdots \otimes v_{S_\ell}$ 
%%where $S_i \subseteq \{1,2, \ldots, n\}$
%%and $|S_i| = \mu_i$ and yet another is some sort of coset module
%The simplest might be the action on the $S_n$ module of
%the quotient group
%$S_{n} \times \cdots \times S_n/(S_{\mu_1}\times S_{n-\mu_1}) \times \cdots %\times (S_{\mu_\ell}\times S_{n-\mu_{\ell(\mu)}})$}
and
the symmetric function $\st_{\mu_1} \st_{\mu_2} \cdots \st_{\mu_{\ell(\mu)}}$
is the character of the symmetric group module
$$\SS^{(n-\mu_1,\mu_1)} \otimes \SS^{(n-\mu_2,\mu_2)}
\otimes \cdots \otimes \SS^{(n-\mu_{\ell(\mu)},\mu_{\ell(\mu)})}$$
where $\SS^\lambda$ is an irreducible symmetric group module indexed by
a partition $\lambda$ and the symmetric group acts diagonally on the tensors.

For a partition $\mu = (\mu_1, \ldots, \mu_{\ell(\mu)})$,
$$\hht_{\mu_1} \hht_{\mu_2} \cdots \hht_{\mu_{\ell(\mu)}} = \sum_\nu a_{\mu,\nu} \hht_\nu$$
where $a_{\mu,\nu}$ is equal to the number of set valued partitions
whose multiplicity of the sets is equal to $\nu$ and whose
content is equal to 
$\dcl 1^{\mu_1}, 2^{\mu_2}, \ldots, \ell^{\mu_{\ell}}\dcr$.

%The change of basis between
%$\hht_{\mu_1} \hht_{\mu_2} \cdots \hht_{\mu_{\ell(\mu)}}$
%and $\hht_\nu$ is equal to the number of set valued partitions
%whose multiplicity of the sets is equal to $\nu$ and whose
%content is equal to 
%$\dcl 1^{\mu_1}, 2^{\mu_2}, \ldots, \ell^{\mu_{\ell}}\dcr$.
And as a consequence of
Equation \eqref{eq:stdef}
$$\hht_{\mu_1} \hht_{\mu_2} \cdots \hht_{\mu_{\ell(\mu)}} = \sum_\nu b_{\mu,\nu} \st_\nu$$
where $b_{\mu,\nu}$ is equal to the number of set
valued tableaux of shape $(r,\nu)/(\nu_1)$ and
content $\dcl 1^{\mu_1}, 2^{\mu_2}, \ldots, \ell^{\mu_{\ell}}\dcr$.

%the
%coefficient of $\st_\nu$ in $\hht_{\mu_1} \hht_{\mu_2} \cdots \hht_{\mu_{\ell(\mu)}}$ is equal to the number of set
%valued tableaux of shape $(r,\nu)/(\nu_1)$ and
%content $\dcl 1^{\mu_1}, 2^{\mu_2}, \ldots, \ell^{\mu_{\ell}}\dcr$.

This family of symmetric functions has close connections (but they are not exactly the same)
to (skew) Grothendeick symmetric functions.  The coefficient
of a monomial symmetric function $m_\mu$ in a Grothendeick
symmetric function indexed by the partition
$\lambda$ or the skew tableaux $\lambda/\nu$) 
(see for intstance \cite{Buch} for a combinatorial 
formula for a monomial expansion
of symmetric functions $G_{\lambda/\mu}$ in terms of set valued tableaux)
is equal to $(-1)^{|\mu|-|\la|}$
times the number of set valued
tableaux of shape $\lambda$ and content $\mu$.

This connection demonstrates how these symmetric functions
are closely related
but the fact that the first row can be of different
sizes means that (for example) the tableaux 
\squaresize=14pt
$\raisebox{-5pt}{\young{12\cr}}$
and $\raisebox{-5pt}{\young{1&2\cr}}$ contribute a $2$ to the coefficient
of $\st_{()}$ in $\hht_{1} \hht_1$ while the same tableaux contribute
a term of $-m_{11}$ in
$G_{1}$ and a term of $+m_{11}$ in $G_{2}$.
\end{subsection}

\begin{subsection}{Quantum entanglement}
Multiple products of Kronecker coefficients have been considered in the literature with relation to quantum information theory.
A measure of quantum entanglement of qubits can be
reformulated as the calculation of
repeated Kronecker products of the symmetric functions
$s_{d,d}$ or $h_{d,d}$
\cite{GWXZ1, GWXZ2, GMWX, LT1, LT2, Wal1, Wal2}
and our combinatorial interpretation
can be useful in this application.

In particular, the approach taken in \cite{GMWX} to calculating the
measurement of qubits is to compute the multiplicities in the
Kronecker products of $h_{d,d}$ and $h_{d+1,d-1}$ and express
the $k$-fold Kronecker product
$\underbrace{s_{d,d} \ast s_{d,d} \ast \cdots \ast s_{d,d}}_{k\hbox{ times}}$
in terms of these expressions.

A consequence of Theorem \ref{thrm:htkmultiplestla} is that
the coefficient of $\st_{()}$ in the expression
$(\hht_d)^a (\hht_{d-1})^{k-a}$ is equal to the the number of set
valued tableau of shape $(r)$
and the content is equal to $\dcl 1^d, 2^d,
\ldots, a^d,(a+1)^{d-1},\ldots,k^{d-1}\dcr$.  If we take the
image of this symmetric function in the Frobenius map
$\phi_{2d}$ (from equation \eqref{eq:frob})
then there will be some cancellation of terms
but the resulting combinatorial interpretation for the coefficient
of $s_{2d}$ in this expression simplifies to the expression
stated in the following proposition.

\begin{prop}
The coefficient of the Schur function $s_{2d}$ in the
Kronecker product
$$\underbrace{h_{d,d} \ast h_{d,d} \ast \cdots \ast h_{d,d}}_{a\hbox{ times}}
\ast \underbrace{h_{d+1,d-1} \ast h_{d+1,d-1} \ast \cdots \ast h_{d+1,d-1}}_{k-a\hbox{ times}}$$
is equal to the number of set valued tableaux of shape $(r)$
where $r \leq 2d$ and the content is equal to $\dcl 1^d, 2^d,
\ldots, a^d,(a+1)^{d-1},\ldots,k^{d-1}\dcr$.
\end{prop}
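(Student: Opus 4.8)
The plan is to transport the identity through the characteristic map $\phi_{2d}$. First I would note that $\phi_{2d}(\hht_d)=h_{(2d-d,d)}=h_{d,d}$ and $\phi_{2d}(\hht_{d-1})=h_{(2d-(d-1),d-1)}=h_{d+1,d-1}$ by the restatement of Theorem~\ref{thrm:charsummary}, and that $\phi_{2d}(fg)=\phi_{2d}(f)\ast\phi_{2d}(g)$; hence the Kronecker product in the statement is precisely $\phi_{2d}((\hht_d)^a(\hht_{d-1})^{k-a})$. Since $s_{2d}=\phi_{2d}(\st_{()})$, the task reduces to computing the coefficient of $s_{2d}$ in the image of $(\hht_d)^a(\hht_{d-1})^{k-a}$ under $\phi_{2d}$.

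Next I would expand $(\hht_d)^a(\hht_{d-1})^{k-a}=\sum_\ga b_\ga\,\st_\ga$ using Theorem~\ref{thrm:htkmultiplestla} with $\la=()$ and $\alpha=(d^a,(d-1)^{k-a})$: here $b_\ga$ is the number of set-valued column-strict tableaux of shape $(r,\ga)/(\ga_1)$ and content $\dcl 1^d, 2^d, \ldots, a^d, (a+1)^{d-1},\ldots,k^{d-1}\dcr$, and the lattice condition is vacuous because $\la=()$ produces no barred entries. Applying $\phi_{2d}$ term by term and using $\phi_{2d}(\st_\ga)=s_{(2d-|\ga|,\ga)}$ --- read, for the finitely many $\ga$ with $|\ga|+\ga_1>2d$, through the usual Jacobi--Trudi (Weyl) straightening, so each term is $0$ or $\pm$ a genuine Schur function --- a short straightening computation shows that $s_{(2d-|\ga|,\ga)}$ straightens to a nonzero multiple of $s_{2d}$ exactly when $\ga=()$ (coefficient $+1$) or $\ga=(2d+1,1^{\ell-1})$ with $\ell\ge1$ (coefficient $(-1)^{\ell}$). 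Therefore the coefficient of $s_{2d}$ in the Kronecker product equals
\[
b_{()}-\sum_{\ell\ge1}(-1)^{\ell-1}b_{(2d+1,1^{\ell-1})}.
\]

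It remains to show this alternating sum equals the number of set-valued tableaux of shape $(r)$ with $r\le 2d$ and the given content. Since $b_{()}$ counts set-valued single rows of every length $r\ge1$, this is equivalent to
\[
\sum_{\ell\ge1}(-1)^{\ell-1}b_{(2d+1,1^{\ell-1})}=\#\{\text{set-valued single rows of length }r\ge 2d+1\text{ with the given content}\}.
\]
I would prove this by a sign-reversing involution. First observe that in a tableau of shape $(r,2d+1,1^{\ell-1})/(2d+1)$ the first row occupies columns $2d+2,\ldots,r$, which lie over no other cell, so such a tableau is a weakly increasing row $A$ together with an independent column-strict filling of the hook $(2d+1,1^{\ell-1})$ --- a second row $B=(B_1\le\ldots\le B_{2d+1})$ and a leg $B_1<L_1<\ldots<L_{\ell-1}$ --- with prescribed total content. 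Declaring a tableau to be a fixed point when the leg is empty and $A$ ends no later (in reverse lex) than $B_1$, the involution acts at the junction of the leg top and the end of $A$: if the last entry of $A$ (or $B_1$, when $A$ is empty) strictly exceeds the leg top (or $B_1$, when the leg is empty), it slides that last entry of $A$ down to become the new leg top; otherwise it slides the leg top up to become the new last entry of $A$. This flips the parity of $\ell$ off the fixed points, and a fixed point is exactly a single weakly increasing sequence $A_1\le\ldots\le A_{|A|}\le B_1\le\ldots\le B_{2d+1}$, i.e.\ a single row of length $\ge 2d+1$. Subtracting these from $b_{()}$ leaves the single rows of length $\le 2d$, as required. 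The main obstacle is making the involution well defined and genuinely sign-reversing: one must fix a tie-breaking convention when an entry of $A$ equals the leg top (these lie in different columns, so reverse lex does not distinguish them), check that no move introduces a repeated entry in a cell, and verify the boundary cases in which $A$ or the leg is empty.
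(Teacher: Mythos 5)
Your proposal takes the same route the paper gestures at --- push $(\hht_d)^a(\hht_{d-1})^{k-a}$ through $\phi_{2d}$, observe that terms cancel, and identify the surviving objects --- but you actually spell out the cancellation, which the paper leaves entirely to the reader. The expansion via Theorem~\ref{thrm:htkmultiplestla} with $\la=()$, the identification of $\gamma=()$ and $\gamma=(2d+1,1^{\ell-1})$ as the only shapes for which $s_{(2d-|\gamma|,\gamma)}$ straightens to $\pm s_{2d}$, and the sign-reversing involution that slides the last entry of the detached first row $A$ down onto the leg (or back) are all correct; I checked both the straightening analysis (a length-$(\ell+1)$ cycle gives sign $(-1)^\ell$) and that the involution is an honest involution whose fixed points are exactly the single weakly increasing rows of length $\ge 2d+1$, obtained by fusing $A$ onto the front of $B$.

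The one place where something is asserted rather than justified is the claim that $\phi_{2d}(\st_\gamma)$ equals the Jacobi--Trudi \emph{straightening} of $s_{(2d-|\gamma|,\gamma)}$ when $|\gamma|+\gamma_1>2d$. The paper only states $\phi_n(\st_\lambda)=s_{(n-|\lambda|,\lambda)}$ for $n\ge|\lambda|+\lambda_1$, so you are genuinely extending the formula to the range where the straightening is nontrivial. The claim is true and is implicit in the plethystic/determinantal construction of the $\st$-basis in \cite{OZ16} (small examples such as $\phi_2(\st_2)=-s_{11}$ and $\phi_2(\st_{11})=0$ confirm it), but as written it is the one load-bearing step that is neither quoted from the paper nor proved; a reference to the appropriate result in \cite{OZ16}, or a short derivation from the $\hht$-to-$\st$ change of basis together with $\phi_n(\hht_\mu)=h_{n-|\mu|}h_{\mu_1}\cdots h_{\mu_\ell}$, would close this. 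Finally, the caveats you raise at the end --- tie-breaking when $A$ ends with a set equal to the leg top, and whether a move could create a repeated entry in a cell --- are not actually issues: the involution moves entire cells, never edits their contents, and in the tied case the appended copy is the one slid back, so everything is well defined without any extra convention.
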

\end{subsection}

\begin{subsection}{A relationship between the irreducible
character basis and plethysm} 
\label{restriction-plethysm}

% Another open problem relates to the multiplicities when we restrict a $GL_n$ representation to $S_n$. These multiplicities, $r_{\la\mu}$, are the change of basis coefficients between the Schur basis and the irreducible character basis, i.e, 
%$$s_\mu = \sum_{\la} r_{\la\mu} \st_\la$$
%and have appeared in the literature \cite{Lit2, King, ScharfThibon, Nis, ButlerKing}. %Although, they are known to have an expression in terms of the plethysm operation, see Section \ref{restriction-plethysm} for more details, it remains an open problem to find a combinatorial interpretation for them.

Littlewood \cite{Lit2} proved an expression for
the multiplicity of an irreducible $S_n$ module indexed by
the partition $(n-|\la|,\lambda)$ in an irreducible polynomial
$GL_n$ module indexed by the partition $\nu$.
Scharf and Thibon \cite{ScharfThibon}
proved this formula using modern notation and symmetric
function techniques to show explicitly that
it was equal to
$$\left< s_\nu, 
s_{(n - |\lambda|,\lambda)}[1+h_1+h_2+h_3+\cdots] \right>~.$$
Here we are using $f[g]$ to represent the plethysm of two
symmetric functions (see \cite{Mac, ScharfThibon}).

Since this multiplicity is also equal to the coefficient
of an irreducible character basis element $\st_\la$ in the
a Schur function $s_\lambda$, then we have (again,
for $n$ sufficiently large) that
$$s_\nu = \sum_\lambda
\left< s_\nu, 
s_{(n - |\lambda|,\lambda)}[1+h_1+h_2+h_3+\cdots] \right> \st_\la$$
and hence we can extend this symmetric function expression
linearly and we conclude that for any symmetric function $f$,
\begin{equation}
f = \sum_\lambda
\left< f, 
s_{(n - |\lambda|,\lambda)}[1+h_1+h_2+h_3+\cdots] \right> \st_\la~.
\end{equation}
We have a similar formula using the scalar product, $\left<, \right>_@$,
introduced in Section \ref{sec:scalarproduct}. Since the $\st$-basis is self 
dual with respect to this scalar product (see equation \eqref{eq:scalarproduct}),
\begin{equation}
f = \sum_\lambda
\left< f, 
\st_\la \right>_@ \st_\la~.
\end{equation}
We can conclude by linearity that
$$\left< f, g \right>_@ = 
\left< f, \phi_n(g)[1+h_1+h_2+h_3+\cdots] \right>$$
where $\phi_n$ is the Frobenius map defined in equation \eqref{eq:frob}.

\end{subsection}

\begin{subsection}{Characterizations by Pieri rules}
One reason to focus on the Pieri rule of a symmetric
function is that it provides a way of defining or
characterizing a basis.

\begin{definition} The family of symmetric functions
$\{\st_\la\}_\lambda$ may be defined
recursively as the unique set of symmetric functions
satisfying $\st_{()} = 1$ and
$$\st_\lambda = h_{\la_1} \st_{\overline{\lambda}} - \sum_T \st_{\overline{sh(T)}}$$
where the sum is over all $T \in \MCT(\overline{\la}, (k))$
with $\overline{sh(T)} \neq \la$ and
such that $T$ is a lattice tableau.
\end{definition}

\begin{example} We use this definition to calculate the expansion of
$\st_\la$ in the complete homogeneous basis for $|\la| \leq 3$.
For example,
\begin{itemize}
\item $\st_1 = h_1 - \st_{()} = h_1 - 1$.
\item $\st_2 = h_2 - (2\st_1 + 2\st_{()})
 = h_2 - 2 h_1$.
\item
$\st_{11} = h_1 \st_1 -(\st_2 + 2\st_1 + \st_{()})
= h_{11} - h_2 -h_1 +1$.
\item $\st_3 = h_3 - (\st_{11} + 2 \st_2 + 4\st_1 + 3 \st_{()})
= h_3 - h_2 - h_{11} + h_1$.
\item
$\st_{21} = h_2 \st_1 - (\st_3 + 3 \st_2 
+ 3 \st_{11} + 5 \st_1 + 2 \st_{()})= h_{21} - h_3 - 2 h_{11} + 3h_1$.
\item $\st_{111} = h_1 \st_{11} - (\st_{21} + \st_{2} + 2\st_{11} + \st_1)
= h_{111} - 2h_{21} + h_3 + h_2 - h_{11} + h_1 - h_{()}$.
\end{itemize}
\end{example}

We can similarly define $\{ \st_\la\}_\la$ in through the Pieri
rules that are given in Theorem
\ref{thrm:htkmultiplestla}
and Theorem \ref{thrm:stkmultiplestla}.
\end{subsection}

\end{section}

\end{document}